\newcommand{\isdef}{\ensuremath{\mathrel{\mathop:}=}}		% Das "ist-definiert"-Zeichen
\newcommand{\llangle}{\ensuremath{\langle\!\langle}}
\newcommand{\rrangle}{\ensuremath{\rangle\!\rangle}}
\DeclareMathOperator{\Sym}{Sym}
\DeclareMathOperator{\dist}{dist}
\DeclareMathOperator{\Stab}{Stab}
\newcommand{\menge}[1]{\ensuremath{\mathbb{#1}}}
\newcommand{\N}{\menge{N}}
\newcommand{\Z}{\menge{Z}}
\renewcommand{\phi}{\varphi}
\renewcommand{\epsilon}{\varepsilon}
\renewcommand{\subset}{\subseteq}
\newtheorem{lemma}{Lemma}[section]		% Lemma
\newtheorem{example}[lemma]{Example}		% Beispiel
\newtheorem{remark}[lemma]{Remark}
\newtheorem{theorem}[lemma]{Theorem}			% Satz
\newtheorem*{theorem*}{Theorem}			% Satz
\newtheorem{corollary}[lemma]{Corollary}		% Korollar
\newtheorem{proposition}[lemma]{Proposition}		% Proposition
\newtheorem{definition}[lemma]{Definition}	% Definition
\newcommand{\tikzAngleOfLine}{\tikz@AngleOfLine}
\def\tikz@AngleOfLine(#1)(#2)#3{%
  \pgfmathanglebetweenpoints{%
    \pgfpointanchor{#1}{center}}{%
    \pgfpointanchor{#2}{center}}
  \pgfmathsetmacro{#3}{\pgfmathresult}%
}
\newcommand{\triangleMarkedLine}[4][black]{
	\draw[#1] (#2) -- (#3);
	\tikzAngleOfLine(#2)(#3){\angle}
	\def\dir{#4}
	\def\dist{0.18}
	\ifnum\dir>0
		\draw[#1,-open triangle 60] ($(#2)!0.5!(#3)$) -- +(\angle-90:\dist);
	\else
		\draw[#1,-open triangle 60] ($(#2)!0.5!(#3)$) -- +(\angle+90:\dist);
	\fi
}
\edef\texforht{TT\noexpand\fi
  \@ifpackageloaded{tex4ht}
    {\noexpand\iftrue}
    {\noexpand\iffalse}}
\newif\iftikz@node@phantom
\tikzset{
  phantom/.is if=tikz@node@phantom,
  text/.code=%
    \edef\tikz@temp{#1}%
    \ifx\tikz@temp\tikz@nonetext
      \tikz@node@phantomtrue
    \else
      \tikz@node@phantomfalse
      \let\tikz@textcolor\tikz@temp
    \fi
}
\patchcmd\tikz@fig@continue{\tikz@node@transformations}{%
  \iftikz@node@phantom
    \setbox\pgfnodeparttextbox\hbox{}
  \fi\tikz@node@transformations}{}{}
\tikzset{ 
    vertexNodePlain/.style = {fill=#1, shape=circle, inner sep=0pt, minimum size=2pt, text=none},
    vertexNodePlain/.default=gray,
    vertexPlain/labels/.style = {
        vertexNode/.style={vertexNodePlain=##1},
        vertexLabel/.style={gray}
    },
    vertexPlain/nolabels/.style = {
        vertexNode/.style={vertexNodePlain=##1},
        vertexLabel/.style={text=none}
    },
    vertexPlain/.style = vertexPlain/#1,
    vertexPlain/.default=labels
}
\tikzset{
    vertexNodeNormal/.style = {fill=#1, shape=circle, inner sep=0pt, minimum size=4pt, text=none},
    vertexNodeNormal/.default = blue,
    vertexNormal/labels/.style = {
        vertexNode/.style={vertexNodeNormal=##1},
        vertexLabel/.style={blue}
    },
    vertexNormal/nolabels/.style = {
        vertexNode/.style={vertexNodeNormal=##1},
        vertexLabel/.style={text=none}
    },
    vertexNormal/.style = vertexNormal/#1,
    vertexNormal/.default=labels
}
\tikzset{
    vertexNodeBallShading/pdf/.style = {ball color=#1},
    vertexNodeBallShading/svg/.style = {fill=#1},
    vertexNodeBallShading/.code = {% Conditional shading depending whether we want pdf or svg output
        \if\texforht
            \tikzset{vertexNodeBallShading/svg=#1!90!black}
        \else
            \tikzset{vertexNodeBallShading/pdf=#1}
        \fi
    },
    vertexNodeBall/.style = {shape=circle, vertexNodeBallShading=#1, inner sep=2pt, outer sep=0pt, minimum size=3pt, font=\tiny},
    vertexNodeBall/.default = orange,
    vertexBall/labels/.style = {
        vertexNode/.style={vertexNodeBall=##1, text=black},
        vertexLabel/.style={text=none}
    },
    vertexBall/nolabels/.style = {
        vertexNode/.style={vertexNodeBall=##1, text=none},
        vertexLabel/.style={text=none}
    },
    vertexBall/.style = vertexBall/#1,
    vertexBall/.default=labels
}
\tikzset{ 
    vertexStyle/.style={vertexNormal=#1},
    vertexStyle/.default = labels
}
\newcommand{\vertexLabelR}[4][]{
    \ifthenelse{ \equal{#1}{} }
        { \node[vertexNode] at (#2) {#4}; }
        { \node[vertexNode=#1] at (#2) {#4}; }
    \node[vertexLabel, #3] at (#2) {#4};
}
\newcommand{\vertexLabelA}[4][]{
    \ifthenelse{ \equal{#1}{} }
        { \node[vertexNode] at (#2) {#4}; }
        { \node[vertexNode=#1] at (#2) {#4}; }
    \node[vertexLabel] at (#3) {#4};
}
\newcommand{\edgeLabelColor}{blue!20!white}
\tikzset{
    edgeLineNone/.style = {draw=none},
    edgeLineNone/.default=black,
    edgeNone/labels/.style = {
        edge/.style = {edgeLineNone=##1},
        edgeLabel/.style = {fill=\edgeLabelColor,font=\small}
    },
    edgeNone/nolabels/.style = {
        edge/.style = {edgeLineNone=##1},
        edgeLabel/.style = {text=none}
    },
    edgeNone/.style = edgeNone/#1,
    edgeNone/.default = labels
}
\tikzset{
    edgeLinePlain/.style={line join=round, draw=#1},
    edgeLinePlain/.default=black,
    edgePlain/labels/.style = {
        edge/.style={edgeLinePlain=##1},
        edgeLabel/.style={fill=\edgeLabelColor,font=\small}
    },
    edgePlain/nolabels/.style = {
        edge/.style={edgeLinePlain=##1},
        edgeLabel/.style={text=none}
    },
    edgePlain/.style = edgePlain/#1,
    edgePlain/.default = labels
}
\tikzset{
    edgeLineDouble/.style = {very thin, double=#1, double distance=.8pt, line join=round},
    edgeLineDouble/.default=gray!90!white,
    edgeDouble/labels/.style = {
        edge/.style = {edgeLineDouble=##1},
        edgeLabel/.style = {fill=\edgeLabelColor,font=\small}
    },
    edgeDouble/nolabels/.style = {
        edge/.style = {edgeLineDouble=##1},
        edgeLabel/.style = {text=none}
    },
    edgeDouble/.style = edgeDouble/#1,
    edgeDouble/.default = labels
}
\tikzset{
    edgeStyle/.style = {edgePlain=#1},
    edgeStyle/.default = labels
}
\newcommand{\faceColorY}{yellow!60!white}   % yellow
\newcommand{\faceColorB}{blue!60!white}     % blue
\newcommand{\faceColorC}{cyan!60}           % cyan
\newcommand{\faceColorR}{red!60!white}      % red
\newcommand{\faceColorG}{green!60!white}    % green
\newcommand{\faceColorO}{orange!50!yellow!70!white} % orange
\newcommand{\faceColorW}{white}             % white
\newcommand{\faceColor}{\faceColorY}
\newcommand{\faceColorSwap}{\faceColorC}
\tikzset{
    face/.style = {fill=#1},
    face/.default = \faceColor,
    faceY/.style = {face=\faceColorY},
    faceB/.style = {face=\faceColorB},
    faceC/.style = {face=\faceColorC},
    faceR/.style = {face=\faceColorR},
    faceG/.style = {face=\faceColorG},
    faceO/.style = {face=\faceColorO},
    faceW/.style = {face=\faceColorW}
}
\tikzset{
    faceStyle/labels/.style = {
        faceLabel/.style = {}
    },
    faceStyle/nolabels/.style = {
        faceLabel/.style = {text=none}
    },
    faceStyle/.style = faceStyle/#1,
    faceStyle/.default = labels
}
\tikzset{ face/.style={fill=#1} }
\tikzset{ faceSwap/.code=
    \ifdefined\swapColors
        \tikzset{face=\faceColorSwap}
    \else
        \tikzset{face=\faceColor}
    \fi
}
\newcommand{\colV}{red}
\newcommand{\colE}{blue}
\newcommand{\colF}{green!80!black}
\newcommand{\V}[1]{\textcolor{\colV}{#1}}
\newcommand{\E}[1]{\textcolor{\colE}{#1}}
\newcommand{\F}[1]{\textcolor{\colF}{#1}}
\newcommand{\rcosets}[2]{\cos(#1:#2)}
\newcommand{\geoAut}[1]{#1^{\#}}
\newcommand{\barycentricTetra}{
    \def\scale{0.9}
        \foreach \i in {0,1,2,3,4}{
                \foreach \j in {0,1,2,3,4}{
                    \coordinate (P\i\j) at ($(\scale*\i,0)+(60:\scale*\j)$);
                }
            }

            \coordinate (ZL) at (barycentric cs:P00=1,P20=1,P02=1);
            \coordinate (ZR) at (barycentric cs:P20=1,P40=1,P22=1);
            \coordinate (ZM) at (barycentric cs:P02=1,P22=1,P20=1);
            \coordinate (ZU) at (barycentric cs:P02=1,P22=1,P04=1);

            \foreach \z/\b in {ZU/P03, ZU/P13, ZU/ZM, ZM/ZL, ZM/ZR, ZL/P01, ZL/P10, ZR/P31, ZR/P30}{
                \draw[\colV] (\z) -- (\b);
            }
            \foreach \z/\b in {ZU/P04, ZU/P02, ZU/P22, ZM/P02, ZM/P22, ZM/P20, ZL/P02, ZL/P00, ZL/P20, ZR/P22, ZR/P20, ZR/P40}{
                \draw[\colE] (\z) -- (\b);
            }
            \foreach \z/\b in {P00/P04, P04/P40, P40/P00, P02/P22, P22/P20, P20/P02}{
                \draw[\colF] (\z) -- (\b);
            }
}
\author{Markus Baumeister\footnote{Lehr- und Forschungsgebiet Algebra, RWTH Aachen University, Pontdriesch 10-16, Aachen, Germany}\footnote{baumeister@mathb.rwth-aachen.de}}
\title{Characterisation of geodesic self--dual regular surface triangulations}
\begin{document}
\maketitle
Regular surface triangulations appear commonly
in the literature. The interpretation as
\textit{regular maps} 
(e.\,g. \cite{ConderDobcsanyi_DeterminingRegularMaps}) makes
it easy to apply group--theoretic arguments to classify
regular triangulations. The approach used by
Dress (\cite{DressTilings}) has a more combinatorial flavour
and focuses on the flags of the triangulation, together
with their adjacencies. By recording these adjacencies
as permutations, group--theoretic arguments similar to those 
for regular maps become applicable.
We denote triangulations of closed surfaces, where every vertex is incident to
exactly $d$ triangles, as \textbf{degree--$d$--surfaces}.

By applying the 
homomorphism from a triangle group to a degree--$d$--surface
(\cite{Conder_RegularMapsHypermapsEulerCharacteristic}), we obtain a
correspondence between triangulations and certain subgroups of
triangle groups (Subsection \ref{Subsect_TriangleGroups}).

In this paper, we focus on \textit{geodesic duality}, the external surface
symmetry called \textit{opp} in Wilsons classification 
\cite{Wilson_OperatorsOverRegularMaps}. 
Similar to \cite{ArchConder_TrinitySymmetry} and 
\cite{ArchRich_ConstructionClassificationSelfDualSphericalPolyhedra}, 
we want to characterise all triangulations that
are self--dual with respect to geodesic duality.
Using the correspondence
to triangle subgroups, we characterise the corresponding
subgroups instead (Subsection \ref{Subsect_GeodesicTriangleGroups}). 
We obtain that all of these subgroups
contain a particular normal subgroup.

Therefore, we can interpret a degree--$d$--surface as
homomorphic image of a quotient of a triangle group (Section 
\ref{Sect_GeodesicTriangleGroup}). To carry over
the characterisation of subgroups corresponding to triangulations, we
employ the voltage assignments from \cite{ArchConder_TrinitySymmetry}
(Section \ref{Sect_Uncollapsed}). This
culminates in our characterisation of all geodesic self--dual triangulations:

\begin{theorem*}
    Let $H_d \isdef \langle a,b,c \mid a^2,b^2,c^2,(ab)^3,(ac)^2,(bc)^d,(bac)^d\rangle$
    be the quotient of a triangle group
    with $d \geq 5$ and $d \neq 7$. 
    Let $\#: H_d \to H_d, \quad a \mapsto a,\quad b\mapsto b,\quad c\mapsto ac$.

    There is a 1--1--correspondence between geodesic self--dual degree--$d$--surfaces
    and conjugacy classes of subgroups 
    $V \leq H_d$ 
    such that
    \begin{enumerate}
        \item $g^{-1}Vg \cap X = \{1\}$ for all $g \in H_d$ and 
            $X \in \{ \langle a\rangle, \langle c\rangle, \langle ab\rangle, \langle ac\rangle, \langle bc\rangle \}$
        \item $\geoAut{V}$ is conjugate to $V$.
    \end{enumerate}
\end{theorem*}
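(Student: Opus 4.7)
The strategy is to combine the triangle-group correspondence established in Subsection \ref{Subsect_TriangleGroups} with the characterisation of geodesic self-dual triangulations by a normal subgroup, from Subsection \ref{Subsect_GeodesicTriangleGroups}, and then identify geodesic duality with the endomorphism $\#$ via the voltage-assignment machinery of Section \ref{Sect_Uncollapsed}. By Subsection \ref{Subsect_TriangleGroups}, degree-$d$-surfaces correspond bijectively to conjugacy classes of subgroups $U$ of the triangle group $\Delta = \langle a,b,c \mid a^2,b^2,c^2,(ab)^3,(ac)^2,(bc)^d\rangle$ whose conjugates meet the stabilisers of vertices, edges and faces only in $\{1\}$. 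These stabilisers are generated by $a$, $c$, $ab$, $ac$ and $bc$, exactly matching condition~(1). By Subsection \ref{Subsect_GeodesicTriangleGroups}, the surface is moreover geodesic self-dual iff $U$ contains the normal subgroup $N$ identified there (the normal closure of $(bac)^d$); the lattice isomorphism theorem then gives a conjugacy-preserving bijection between these $U$ and the subgroups $V = U/N$ of $H_d = \Delta/N$. Under this passage, condition~(1) transports verbatim: the listed cyclic subgroups are the images of the stabilisers, and under the hypothesis $d \geq 5$, $d \neq 7$ they inject into $H_d$, so the torsion-freeness conditions are preserved.

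Next, I would verify that $\#$ descends to a well-defined endomorphism of $H_d$ by checking that each defining relator maps to $1$: $a^2, b^2, (ab)^3$ are fixed; $c^2 \mapsto (ac)^2 = 1$; $(ac)^2 \mapsto (a \cdot ac)^2 = c^2 = 1$; $(bc)^d \mapsto (bac)^d = 1$; and $(bac)^d \mapsto (ba \cdot ac)^d = (bc)^d = 1$. Since $\#^2$ fixes each generator, $\#$ is in fact an involution of $H_d$. Using Section \ref{Sect_GeodesicTriangleGroup} and the voltage-assignment framework of Section \ref{Sect_Uncollapsed}, I would then identify $\#$ with the action of Wilson's \emph{opp} operator on the subgroup $V$: applying $\#$ to a subgroup corresponds combinatorially to applying geodesic duality to the triangulation. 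Hence geodesic self-duality is equivalent to $\geoAut{V}$ being conjugate to $V$, which is condition~(2). In the reverse direction, given $V \leq H_d$ satisfying (1) and (2), its preimage $U$ in $\Delta$ satisfies the triangle-group torsion-freeness, contains $N$, and is compatible with geodesic duality, hence defines a geodesic self-dual degree-$d$-surface.

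The main obstacle will be the identification of Wilson's \emph{opp} operator with the algebraic endomorphism $\#$ at the level of $H_d$. Since geodesic duality is an \emph{external} symmetry of $\Delta$ (it is not inner at that level), it becomes internal only after collapsing by $N$; the combinatorial and algebraic descriptions must be reconciled using the voltage-assignment translation of \cite{ArchConder_TrinitySymmetry}, and the compatibility between voltages, the quotient, and the conjugation action of $H_d$ must be verified step by step. Pinning down why precisely the excluded values $d \leq 4$ and $d = 7$ lie outside the scope --- these being the degenerate cases where $H_d$ fails to be ``uncollapsed'' in the sense of Section \ref{Sect_Uncollapsed}, e.g.\ because one of the listed cyclic subgroups no longer injects or an unintended coincidence of relators occurs --- is where the technical weight of the argument concentrates.
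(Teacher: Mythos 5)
Your overall skeleton matches the paper: surface subgroups of $T_d$ characterised by trivial intersection with the dihedral stabilisers, the observation that geodesic self--duality forces $\llangle (bac)^d\rrangle \leq U$, the lattice isomorphism down to $V \leq H_d$, the well--definedness check for $\#$, and the identification of self--duality with $\geoAut{V}$ being conjugate to $V$. (Two small remarks: $\langle b\rangle$ is absent from the final list only because $(ab)a(ab)^{-1}=b$, and the paper identifies the dual surface with $(\rcosets{H_d}{\geoAut{V}},a,b,c)$ by a short equivariance computation in Proposition \ref{Prop_GeodesicDualOfDegreeDSurface} --- no voltage machinery is needed for that step, contrary to what you suggest.)

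The genuine gap is your claim that condition~(1) ``transports verbatim'' because the listed cyclic subgroups ``inject into $H_d$'' under the hypothesis $d\geq 5$, $d\neq 7$. This is not a formality; it is the main technical content of the paper. Transporting the intersection conditions through the quotient (Lemma \ref{Lem_GroupIntersectionReduction}) requires $\llangle (bac)^d\rrangle \cap X = \{1\}$ in $T_d$ for each $X$ in the list. For $X \neq \langle bc\rangle$ this follows from easy dihedral--quotient arguments (Lemma \ref{Lem_TrivialIntersectionWithNormalSubgroup}), but for $X = \langle bc\rangle$ it is equivalent (Lemma \ref{Lem_UncollapsedConditionForSelfDuality}) to $H_d$ being \emph{uncollapsed}, i.e. $H_d \neq H_k$ for all proper divisors $k$ of $d$, and this must be proved for the infinitely many $d \geq 5$, $d \neq 7$, most of which give infinite $H_d$. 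The paper devotes Sections \ref{Sect_GeodesicTriangleGroup} and \ref{Sect_Uncollapsed} to exactly this: GAP computations for prime powers (Theorem \ref{Theo_PrimePowersUncollapsedWithGAP}), a covering construction via corner voltage assignments producing degree--$dp$--surfaces to separate $H_d$ from $H_{dp}$ (Proposition \ref{Prop_ConstructLiftSurface}, yielding Propositions \ref{Prop_HpnUncollapsed} and \ref{Prop_HFourPUncollapsed}), and an inductive reduction (Lemma \ref{Lem_ReduceByCommonFactor}, Theorem \ref{Theo_HdUncollapsed}). Your proposal assigns the voltage assignments to the (easy) task of matching Wilson's \emph{opp} with $\#$, and frames the remaining difficulty as explaining why $d\leq 4$ and $d=7$ are excluded; in fact the burden lies in proving uncollapsedness for all the \emph{included} $d$, and your plan contains no argument, or even a strategy, for that step.
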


In Section \ref{Sect_Classification}, we present
the full list of geodesic self--dual degree--$d$--surfaces
for $d < 10$.

\section{Concept of surfaces}\label{Section_DefSurface}
In this section, we give a combinatorial description of surface 
triangulations, that focusses on the \textbf{flags}
of the triangulation $T$ (i.\,e. triples consisting of incident vertex, edge, and face).
Geometrically, this corresponds to the barycentric subdivision of each face
in $T$.

The resulting ``small
triangles'' within the original faces can be identified with the \textbf{flags}
of $T$. For example,
the barycentric subdivision of the tetrahedron has the following form 
(we identify edges according to the arrows drawn), where the tetrahedron is
build from the equilateral triangles with green edges around the $f_i$.
%\begin{wrapfigure}{r}{0.6\textwidth}
    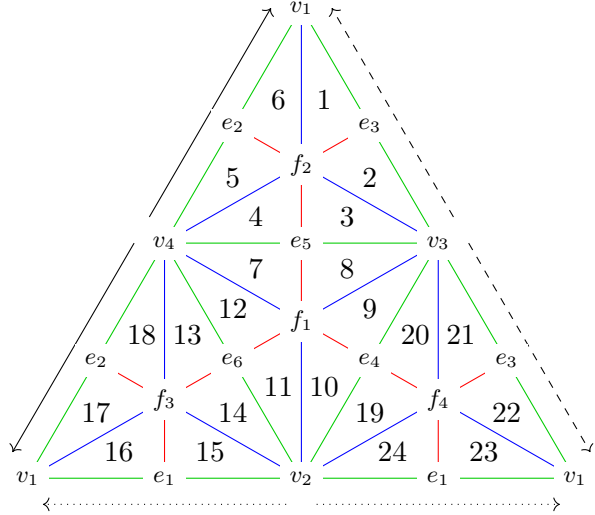
\begin{figure}[htbp]
        \begin{center}
            \begin{tikzpicture}[scale=2]
                \barycentricTetra
                
                \foreach \a/\b/\c/\n in 
                    {ZU/P04/P13/1, ZU/P22/P13/2, ZU/P22/P12/3, ZU/P02/P12/4, ZU/P03/P02/5, ZU/P03/P04/6,
                     ZM/P02/P12/7, ZM/P12/P22/8, ZM/P22/P21/9, ZM/P20/P21/10, ZM/P20/P11/11, ZM/P11/P02/12,
                     ZL/P02/P11/13, ZL/P11/P20/14, ZL/P20/P10/15, ZL/P10/P00/16, ZL/P00/P01/17, ZL/P01/P02/18,
                     ZR/P20/P21/19, ZR/P21/P22/20, ZR/P22/P31/21, ZR/P31/P40/22, ZR/P40/P30/23, ZR/P30/P20/24}{
                         \node at (barycentric cs:\a=1,\b=1,\c=1) {\n};
                     }

                \def\dist{0.2}
                \draw[->] ($(P02)+(120:\dist)$) -- ($(P04)+(180:\dist)$);
                \draw[->] ($(P02)+(180:\dist)$) -- ($(P00)+(120:\dist)$);
                
                \draw[->, dashed] ($(P22)+(60:\dist)$) -- ($(P04)+(0:\dist)$);
                \draw[->, dashed] ($(P22)+(0:\dist)$) -- ($(P40)+(60:\dist)$);

                \draw[->, dotted] ($(P20)+(-120:\dist)$) -- ($(P00)+(-60:\dist)$);
                \draw[->, dotted] ($(P20)+(-60:\dist)$) -- ($(P40)+(-120:\dist)$);

                \foreach \p in {P00,P10,P20,P30,P40,P01,ZL,P11,ZM,P21,ZR,P31,P02,P12,P22,P03,ZU,P13,P04}{
                    \fill[gray] (\p) circle (0.4pt);
                }

                \foreach \p/\n in {P00/$v_1$, P20/$v_2$, P40/$v_1$, P02/$v_4$, P22/$v_3$, P04/$v_1$}{
                    \node[fill=white] at (\p) {\footnotesize\n};
                }
                \foreach \p/\n in {P10/$e_1$, P30/$e_1$, P01/$e_2$, P11/$e_6$, P21/$e_4$, P31/$e_3$, P12/$e_5$, P03/$e_2$, P13/$e_3$}{
                    \node[fill=white] at (\p) {\footnotesize\n};
                }
                \foreach \p/\n in {ZL/$f_3$, ZM/$f_1$, ZR/$f_4$, ZU/$f_2$}{
                    \node[fill=white] at (\p) {\footnotesize\n};
                }

            \end{tikzpicture}
        \end{center}
        \caption{Barycentric subdivison of the tetrahedron}
        \label{Fig_Tetrahedron}
    \end{figure}
%\end{wrapfigure}

We can describe the adjacency structure of these flags with three 
involutions (previous works using this description include \cite{DressTilings} and
\cite{ArchRich_BranchedCoveringsOfMaps}).
The involution $\V{\alpha}$ maps the flag $(v,e,f)$ to the unique other flag
$(v',e,f)$. The involution $\E{\beta}$ does the same for the edges,
while $\F{\gamma}$ does it for the faces. For convenience, we label the
flags by natural numbers. In the example above we obtain:
    \begin{align*}
        \V{\alpha} &= (1,2)(3,4)(5,6)(7,8)(9,10)(11,12)(13,14)(15,16)(17,18)(19,20)(21,22)(23,24), \\
        \E{\beta} &= (1,6)(2,3)(4,5)(7,12)(8,9)(10,11)(13,18)(14,15)(16,17)(19,24)(20,21)(22,23), \\
        \F{\gamma} &= (1,22)(2,21)(3,8)(4,7)(5,18)(6,17)(9,20)(10,19)(11,14)(12,13)(15,24)(16,23).
    \end{align*}

The original vertices, edges, and faces of the triangulation $T$  
can be reconstructed from the involutions:
The vertices correspond to the orbits of $\langle\E{\beta},\F{\gamma}\rangle$,
the edges to the orbits of $\langle\V{\alpha},\F{\gamma}\rangle$, and the faces
to the orbits of $\langle\V{\alpha},\E{\beta}\rangle$ on the set of flags
(which we represent by their labels).

Since all faces of $T$ are triangles, $\V{\alpha}\E{\beta}$ consists only
of 3--cycles. Since we only consider closed surfaces (i.\,e. 
exactly two faces incident to each edge), $\V{\alpha}\F{\gamma}$ only 
consists of 2--cycles. In addition,
$\langle\V{\alpha},\E{\beta},\F{\gamma}\rangle$ should act transitively on
the set of flags (this corresponds to the strong 
connectivity\footnote{If for any two faces $f$, $g$ there is a sequence
of faces $f = f_1,f_2,\dots,f_n = g$ such that $f_i$ and $f_{i+1}$ are
incident to a common edge for all $1 \leq i < n$, the surface 
triangulation is called
\textit{strongly connected}.} of the surface triangulation).

For the purposes of this paper, we define \textit{surfaces} as follows:

\begin{definition}\label{Def_Surface}\label{Def_DegreeDSurface}
    Let $\mathcal{F}$ be a set (called \textbf{flags}) and 
    $\V{\alpha},\E{\beta},\F{\gamma}$ be 
    elements of the symmetric group on $\mathcal{F}$. The quadruple
    $(\mathcal{F},\V{\alpha},\E{\beta},\F{\gamma})$ is a \textbf{surface} if
    the following conditions hold:
    \begin{enumerate}
        \item $\V{\alpha},\E{\beta}$, and $\F{\gamma}$ are involutions that
            do not fix any flag in $\mathcal{F}$.
        \item $\langle\V{\alpha},\E{\beta},\F{\gamma}\rangle$ 
            acts\footnote{The group elements act from the left
            and we denote the action of $\alpha$ on $f$ by $\alpha.f$} transitively
            on $\mathcal{F}$.
        \item $\V{\alpha}\E{\beta}$ consists only of 3--cycles.
        \item $\V{\alpha}\F{\gamma}$ consists only of 2--cycles.
    \end{enumerate}
    We call the orbits of $\langle \E{\beta},\F{\gamma}\rangle$ on $\mathcal{F}$ 
    \textbf{vertices}, the orbits of $\langle\V{\alpha},\F{\gamma}\rangle$
    \textbf{edges}, and the orbits of
    $\langle \V{\alpha},\E{\beta}\rangle$ \textbf{faces}.
    A surface $(\mathcal{F},\V{\alpha},\E{\beta},\F{\gamma})$ is
    a \textbf{degree--$d$--surface} if $\E{\beta}\F{\gamma}$ only consists
    of $d$--cycles (for $d \geq 3$).
\end{definition}
We remark that this definition includes some cases that are not always
regarded as surface triangulations, for example two triangles sharing a boundary.

\begin{corollary}\label{Cor_OrbitLengthsOfDegreeDSurface}
    Let $(\mathcal{F},\V{\alpha},\E{\beta},\F{\gamma})$ be a 
    degree--$d$--surface. Then $\langle\V{\alpha},\E{\beta}\rangle$
    only has orbits of size 6 on $\mathcal{F}$, 
    $\langle\V{\alpha},\F{\gamma}\rangle$
    only has orbits of size 4, and $\langle\E{\beta},\F{\gamma}\rangle$
    only has orbits of size $2d$.
\end{corollary}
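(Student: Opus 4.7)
The plan is to handle all three parts uniformly by choosing the appropriate involution pair $\{x,y\} \subset \{\V\alpha,\E\beta,\F\gamma\}$ whose product $xy$ has only $n$-cycles, with $n=3$ from condition 3 (for $\{\V\alpha,\E\beta\}$), $n=2$ from condition 4 (for $\{\V\alpha,\F\gamma\}$), and $n=d$ from the definition of a degree--$d$--surface (for $\{\E\beta,\F\gamma\}$). I would fix an orbit $O$ of $\langle x,y\rangle$ on $\mathcal{F}$ and a flag $f\in O$; the aim is then to show $|O|=2n$.

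For the upper bound, the image of $\langle x,y\rangle$ in $\Sym(O)$ is a quotient of the dihedral presentation $\langle x,y\mid x^2,y^2,(xy)^n\rangle$, so it has order at most $2n$, whence $|O|\leq 2n$. For the lower bound, it suffices to show the stabiliser of $f$ inside this dihedral image is trivial. Every non-identity element of it is either a rotation $(xy)^k$ with $1\leq k<n$ or a reflection $(xy)^k y$ with $0\leq k<n$. No non-trivial rotation fixes $f$, because it acts as a non-trivial power of an $n$-cycle through $f$. The key step is ruling out the reflections.

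After reducing with $y^2=1$, every reflection $(xy)^k y$ becomes an alternating palindrome in $x$ and $y$: explicitly $y$ for $k=0$, $x$ for $k=1$, $xyx$ for $k=2$, $xyxyx$ for $k=3$, and so on. Splitting such a palindrome at its middle letter lets me rewrite it as $u x u^{-1}$ or $u y u^{-1}$ for a suitable prefix $u$, so every reflection is conjugate in $\langle x,y\rangle$ to $x$ or $y$. Since condition 1 forbids $x$ and $y$ from having fixed points, no conjugate of them does either, and hence no reflection fixes $f$. Therefore the stabiliser of $f$ is trivial and $|O|=2n$. The only subtlety I foresee is cleanly articulating the palindrome-conjugation step; once it is in place, the same argument applies verbatim to all three pairs and yields $|O|\in\{6,4,2d\}$ as claimed.
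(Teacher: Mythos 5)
Your argument is correct; the paper offers no proof of this corollary at all, treating it as immediate from the fact that $\langle\V{\alpha},\E{\beta}\rangle$, $\langle\V{\alpha},\F{\gamma}\rangle$, and $\langle\E{\beta},\F{\gamma}\rangle$ are images of dihedral groups of orders $6$, $4$, and $2d$ acting regularly on their orbits (the fact invoked again in the proof of Lemma \ref{Lem_SurfaceSubgroupRegularityCharacterisation}), and your write-up is exactly the natural fleshing-out of that reasoning: dihedral upper bound, rotations excluded by the cycle condition, reflections excluded because they are conjugate to the fixed-point-free generators. One small bookkeeping refinement: triviality of the stabiliser in the image group by itself only yields $|O|=|\text{image}|$, and the image could a priori be a proper quotient of the dihedral group of order $2n$; however, since you actually verify that every non-identity word $(xy)^k$ with $1\le k<n$ and $(xy)^k y$ with $0\le k<n$ moves $f$, the orbit map $D_{2n}\to O$, $w\mapsto w.f$ is injective as well as surjective, so $|O|=2n$ follows --- your verification already contains everything needed, and only the phrase ``it suffices to show the stabiliser is trivial'' slightly understates what must be (and is) checked.
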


\section{Geodesic duality}\label{Sect_GeodesicDuality}
\def\skalPath{3.5}
Definition \ref{Def_Surface} allows the following duality:
If $(\mathcal{F},\V{\alpha},\E{\beta},\F{\gamma})$ is a surface, then
$(\mathcal{F},\V{\alpha},\E{\beta},\V{\alpha}\F{\gamma})$ is a surface as well
(in \cite[page 562]{Wilson_OperatorsOverRegularMaps}, this operation is called \textit{opp}).

\begin{definition}\label{Def_GeodesicDual}
    Let $S = (\mathcal{F},\V{\alpha},\E{\beta},\F{\gamma})$ be a surface. The
    surface $(\mathcal{F},\V{\alpha},\E{\beta},\V{\alpha}\F{\gamma})$ is 
    called its \textbf{geodesic dual} and denoted by $S^{\#}$.
\end{definition}

\begin{remark}
    Let $S$ be a surface, then $(S^\#)^\# = S$, justifying the name 
    \textit{duality}.
\end{remark}

\begin{example}
    The geodesic dual of the tetrahedron is a projective plane,
    illustrated in Figure \ref{Fig_ProjectivePlane}.
    \begin{figure}[htb]
        \begin{center}
            \begin{tikzpicture}[scale=2]
                \barycentricTetra
                
                \foreach \a/\b/\c/\n in 
                    {ZU/P04/P13/6, ZU/P22/P13/5, ZU/P22/P12/4, ZU/P02/P12/3, ZU/P03/P02/2, ZU/P03/P04/1,
                     ZM/P02/P12/7, ZM/P12/P22/8, ZM/P22/P21/9, ZM/P20/P21/10, ZM/P20/P11/11, ZM/P11/P02/12,
                     ZL/P02/P11/14, ZL/P11/P20/13, ZL/P20/P10/18, ZL/P10/P00/17, ZL/P00/P01/16, ZL/P01/P02/15,
                     ZR/P20/P21/20, ZR/P21/P22/19, ZR/P22/P31/24, ZR/P31/P40/23, ZR/P40/P30/22, ZR/P30/P20/21}{
                         \node at (barycentric cs:\a=1,\b=1,\c=1) {\n};
                     }

                \def\dist{0.2}
                \draw[<-, dashed] ($(P02)+(120:\dist)$) -- ($(P04)+(180:\dist)$);
                \draw[->, dotted] ($(P02)+(180:\dist)$) -- ($(P00)+(120:\dist)$);
                
                \draw[->] ($(P22)+(60:\dist)$) -- ($(P04)+(0:\dist)$);
                \draw[<-, dotted] ($(P22)+(0:\dist)$) -- ($(P40)+(60:\dist)$);

                \draw[<-] ($(P20)+(-120:\dist)$) -- ($(P00)+(-60:\dist)$);
                \draw[->, dashed] ($(P20)+(-60:\dist)$) -- ($(P40)+(-120:\dist)$);

                \foreach \p in {P00,P10,P20,P30,P40,P01,ZL,P11,ZM,P21,ZR,P31,P02,P12,P22,P03,ZU,P13,P04}{
                    \fill[gray] (\p) circle (0.4pt);
                }
                
                \foreach \p/\n in {P00/$w_1$, P20/$w_3$, P40/$w_2$, P02/$w_2$, P22/$w_3$, P04/$w_3$}{
                    \node[fill=white] at (\p) {\footnotesize\n};
                }
                \foreach \p/\n in {P10/$e_2$, P30/$e_3$, P01/$e_1$, P11/$e_6$, P21/$e_4$, P31/$e_1$, P12/$e_5$, P03/$e_3$, P13/$e_2$}{
                    \node[fill=white] at (\p) {\footnotesize\n};
                }
                \foreach \p/\n in {ZL/$f_3$, ZM/$f_1$, ZR/$f_4$, ZU/$f_2$}{
                    \node[fill=white] at (\p) {\footnotesize\n};
                }

            \end{tikzpicture}
        \end{center}
        \caption{Geodesic dual of the tetrahedron}
        \label{Fig_ProjectivePlane}
    \end{figure}
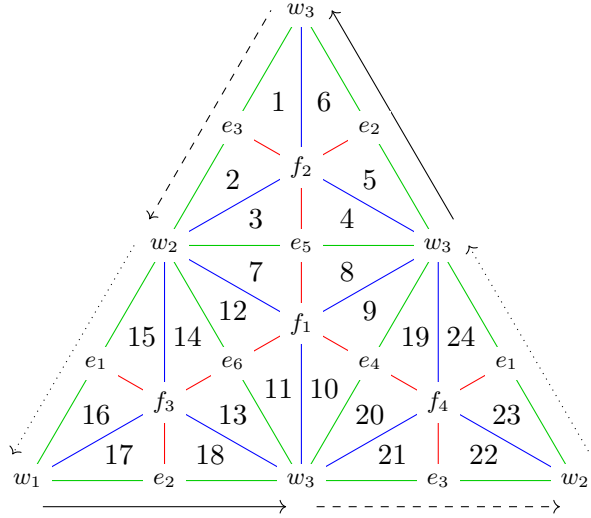
\end{example}

\section{Geometric interpretation of geodesic duality}
The definition of geodesic duality seems very ungeometric.
But, as the
name suggests, there is a deeper geometric meaning there. Fix
a flag $f \in \mathcal{F}$ and compare the actions of 
$\langle \E{\beta}\F{\gamma}\rangle$ and
$\langle \E{\beta}\V{\alpha}\F{\gamma}\rangle$ on $f$. Clearly,
$\langle \E{\beta}\F{\gamma}\rangle.f \subset \langle \E{\beta},\F{\gamma}\rangle.f$, 
so each orbit of 
$\langle \E{\beta}\F{\gamma}\rangle$ belongs to a unique vertex. If we consider the faces
belonging to the flags in $\langle \E{\beta}\F{\gamma}\rangle.f$, we obtain all faces
``around'' a vertex. We call such a set of faces an \textbf{umbrella} of the vertex
(illustrated in Figure \ref{Fig_Umbrella}).
    \begin{figure}[htb]
        \begin{center}
            \begin{tikzpicture}[scale=\skalPath]
                % Define coordinates
                \coordinate (Z) at (0,0);
                \foreach \i in {0,...,5}{
                    \coordinate (P\i) at (-60+60*\i:1);
                    \coordinate (Q\i) at ($(Z)!0.5!(P\i)$);
                }
                \foreach \i in {0,...,4}{
                    \pgfmathparse{int(1+\i)}
                    \pgfmathsetmacro{\nxt}{\pgfmathresult}
     
                    \coordinate (R\i\nxt) at ($(P\i)!0.5!(P\nxt)$);
                    \coordinate (S\i\nxt) at (barycentric cs:Z=1,P\i=1,P\nxt=1);

                    % Fill triangles
                    \fill[yellow!40!white] (Z) -- (S\i\nxt) -- (Q\i) -- cycle;

                    % Draw lines
                    \draw[\colV] (Q\i) -- (S\i\nxt) -- (Q\nxt) (S\i\nxt) -- (R\i\nxt);
                    \draw[\colE] (P\i) -- (S\i\nxt) -- (P\nxt) (S\i\nxt) -- (Z);
                    \draw[\colF] (Z) -- (P\i) -- (P\nxt) -- cycle;

                    % Fill vertices
                    \foreach \p in {Z,P\i,P\nxt,Q\i,Q\nxt,R\i\nxt,S\i\nxt}{
                        \fill[gray] (\p) circle (0.2pt);
                    }
                }

                % Add the arrows
                \def\dist{0.1}
                \draw[->] ($(Z)+(-100:\dist)$) -- ($(P5)+(40:\dist)$);
                \draw[->] ($(Z)+(-80:\dist)$) -- ($(P0)+(140:\dist)$);

                % Fill in labels
                \foreach \a/\b/\n in {Q4/S45/$f$, Q3/S34/$\E{\beta}\F{\gamma}.f$, Q2/S23/$(\E{\beta}\F{\gamma})^2.f$, Q1/S12/$(\E{\beta}\F{\gamma})^3.f$, Q0/S01/$(\E{\beta}\F{\gamma})^4.f$}{
                    \node at (barycentric cs:Z=1,\a=2,\b=2) {\n};
                }
            \end{tikzpicture}
        \end{center}
        \caption{An umbrella}
        \label{Fig_Umbrella}
    \end{figure}
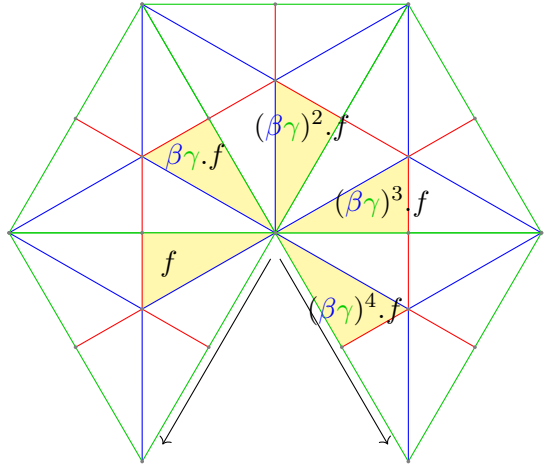

The geometric meaning of $\langle \E{\beta}\V{\alpha}\F{\gamma}\rangle.f$ is not that easily
apparent. Drawing the faces corresponding to the flags in
that orbit forms a ``straight'' strip of triangles (compare Figure \ref{Fig_Geodesic}).
On a purely
combinatorial level, these strips come closest to the notion of
``straight lines''. Therefore, we call these sets of faces \textbf{geodesics}.
    \begin{figure}[htb]
        \begin{center}
            \begin{tikzpicture}[scale=\skalPath]
                \coordinate (P4) at (0,0);
                \coordinate (P5) at (60:1);
                \coordinate (P6) at (1,0);
                \coordinate (P7) at ($(P5)+(P6)$);
                \coordinate (P2) at ($-1*(P6)$);
                \coordinate (P3) at ($(P2)+(P5)$);
                \coordinate (P1) at ($(P2)+(P3)$);

                \foreach \i in {2,...,6}{
                    \pgfmathparse{int(1+\i)}
                    \pgfmathsetmacro{\nxt}{\pgfmathresult}

                    \pgfmathparse{int(\i-1)}
                    \pgfmathsetmacro{\prv}{\pgfmathresult}

                    \coordinate (L\i) at ($(P\i)!0.5!(P\prv)$);
                    \coordinate (R\i) at ($(P\i)!0.5!(P\nxt)$);
                    \coordinate (O\i) at ($(P\prv)!0.5!(P\nxt)$);
                    \coordinate (M\i) at (barycentric cs:P\prv=1,P\i=1,P\nxt=1);

                    % Draw edges
                    \draw[\colV] (L\i) -- (M\i) -- (R\i) (M\i) -- (O\i);
                    \draw[\colE] (P\prv) -- (M\i) -- (P\nxt) (M\i) -- (P\i);
                    \draw[\colF] (P\prv) -- (P\nxt) -- (P\i) -- cycle;

                    % Fill faces
                    \fill[yellow!40!white] (M\i) -- (P\i) -- (R\i) -- cycle;

                    % Fill vertices
                    \foreach \p in {P\prv,P\nxt,P\i,L\i,R\i,M\i,O\i}{
                        \fill[gray] (\p) circle (0.2pt);
                    }
                }

                % Draw arrows
                \def\dist{0.1}
                \draw[->] ($(P2)+(140:\dist)$) -- ($(P1)+(-80:\dist)$);
                \draw[->] ($(P7)+(-100:\dist)$) -- ($(P6)+(40:\dist)$);

                % Write labels
                \foreach \i/\n in {2/$f$, 3/$\E{\beta}\V{\alpha}\F{\gamma}.f$, 4/$(\E{\beta}\V{\alpha}\F{\gamma})^2.f$, 5/$(\E{\beta}\V{\alpha}\F{\gamma})^3.f$, 6/$(\E{\beta}\V{\alpha}\F{\gamma})^4.f$}{
                    \node at (barycentric cs:P\i=1,M\i=2,R\i=2) {\n};
                }
            \end{tikzpicture}
        \end{center}
        \caption{A geodesic}
        \label{Fig_Geodesic}
    \end{figure}
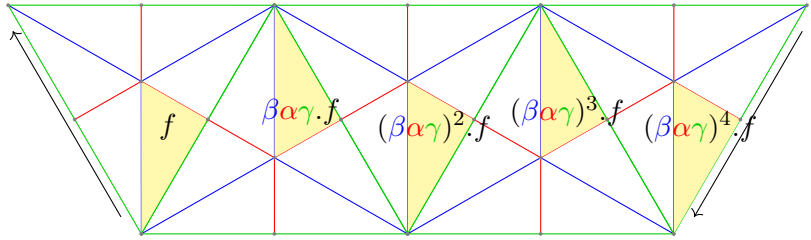

Since geodesic duality exchanges the orbits $\langle \E{\beta}\F{\gamma}\rangle.f$ and
$\langle \E{\beta}\V{\alpha}\F{\gamma}\rangle.f$, it also exchanges umbrellas and geodesics
in a surface. Heuristically, umbrellas are a local structure (to change
an umbrella, you have to change the vertex it corresponds to or one
of those adjacent to it), but geodesics show a global behaviour (if
any vertex is modified, the set of geodesics may change drastically).

Therefore, geodesic duality seems to exchange some local and global properties
in a given surface (and constructs a surface with inverted properties in the
process).
Since it relates very different
surfaces (like tetrahedron and projective plane),
one might hope to gain insight into one by analysing the other.

\section{Self--dual surfaces}\label{Sect_SelfDuality}
Given a notion of duality, a common approach is to analyse
self--dual objects. Here, we search for surfaces $S$ that are isomorphic
to their
geodesic dual $S^\#$.

\begin{definition}\label{Def_Isomorphism}
    Let $(\mathcal{F},\V{\alpha},\E{\beta},\F{\gamma})$ and 
    $(\mathcal{G},\V{\rho},\E{\sigma},\F{\tau})$ be two surfaces.

    A bijective map $\phi: \mathcal{F} \to \mathcal{G}$ is called
    an \textbf{isomorphism} if it satisfies
    \begin{align*}
        \phi^{-1}\V{\alpha}\phi = \V{\rho}, && \phi^{-1}\E{\beta}\phi = \E{\sigma}, && \phi^{-1}\F{\gamma}\phi = \F{\tau}.
    \end{align*}
    $(\mathcal{F},\V{\alpha},\E{\beta},\F{\gamma})$ is 
    \textbf{geodesic self--dual} if it is isomorphic to its geodesic
    dual $(\mathcal{F},\V{\alpha},\E{\beta},\V{\alpha}\F{\gamma})$.
\end{definition}

\subsection{Triangle groups and surface subgroups}\label{Subsect_TriangleGroups}
In this paper, we focus on degree--$d$--surfaces (compare Definition
\ref{Def_DegreeDSurface}), in order to exploit their nice group--theoretic
properties. They correspond to the triangulations in which
every vertex is incident to exactly $d$ faces and are 
intrinsically related to triangle groups 
(compare \cite[Section 6.2.8]{GrossTucker_TopologicalGraphTheory} for more 
details about triangle groups).
\begin{definition}\label{Def_TriangleGroup}
    For $d \in \N$, the \textbf{triangle group} is the finitely presented group
    \begin{equation}
        T_d \isdef \langle a,b,c \mid a^2, b^2, c^2, (ab)^3, (ac)^2, (bc)^d \rangle.
    \end{equation}
\end{definition}

\begin{remark}\label{Rem_TriangleGroupActsOnFlags}
    Let $(\mathcal{F},\V{\alpha},\E{\beta},\F{\gamma})$ be a 
    degree--$d$--surface. Then $T_d$ acts transitively on $\mathcal{F}$
    (from the left) via the permutation representation
    \begin{align*}
        T_d \to \Sym(\mathcal{F}) &&&& a \mapsto \V{\alpha} && b \mapsto \E{\beta} && c \mapsto \F{\gamma}.
    \end{align*}
\end{remark}

It is well--known (see for example
\cite[Theorem 6.3]{NeuStoyThomp94GroupsGeometry}, whose coset notation we
employ) that any transitive group action is 
equivariant to an action on cosets of a subgroup.
\begin{definition}\label{Def_Equivariance}
    Let $G$ be a group acting on $\Omega_1$ and
    $\Omega_2$. An \textbf{equivariance} between these
    actions is a bijection $\rho: \Omega_1 \to \Omega_2$
    fulfilling $\rho(g.\omega) = g.\rho(\omega)$ for
    all $g \in G$ and $\omega \in \Omega_1$.
\end{definition}
\begin{corollary}\label{Cor_SurfaceAsCosetAction}
    Let $(\mathcal{F},\V{\alpha},\E{\beta},\F{\gamma})$ be a 
    degree--$d$--surface, $f \in \mathcal{F}$, and
    $U \isdef \Stab_{T_d}(f)$. Then, 
    $\rcosets{T_d}{U} \to \mathcal{F},\; gU \mapsto g.f$ 
    is an equivariance between the action from Remark
    \ref{Rem_TriangleGroupActsOnFlags} and the left action
    of $T_d$ on $\rcosets{T_d}{U}$.
\end{corollary}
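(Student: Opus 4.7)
The plan is to verify the three properties in order: well--definedness of the map $\rho: gU \mapsto g.f$, bijectivity, and equivariance with respect to the $T_d$--action. All three rest on the orbit--stabiliser principle applied to the transitive action of $T_d$ on $\mathcal{F}$ provided by Remark \ref{Rem_TriangleGroupActsOnFlags} (which in turn uses the definition of a degree--$d$--surface, since otherwise the relation $(bc)^d$ need not act trivially).

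First I would check that $\rho$ is well--defined. Suppose $gU = hU$; then $h^{-1}g \in U = \Stab_{T_d}(f)$, so $(h^{-1}g).f = f$, and multiplying by $h$ from the left (using that $T_d$ acts) yields $g.f = h.f$. The same computation in reverse gives injectivity: if $g.f = h.f$, then $h^{-1}g$ fixes $f$ and so lies in $U$, whence $gU = hU$. Surjectivity is immediate from the transitivity asserted in Definition \ref{Def_Surface}(2), which via Remark \ref{Rem_TriangleGroupActsOnFlags} transfers to transitivity of $T_d$ on $\mathcal{F}$: every $f' \in \mathcal{F}$ equals $g.f$ for some $g \in T_d$, and that $g$ provides the preimage $gU$.

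Finally I would verify equivariance. For $h \in T_d$ the left action on $\rcosets{T_d}{U}$ is $h.(gU) = (hg)U$, so
\[
    \rho(h.(gU)) = \rho((hg)U) = (hg).f = h.(g.f) = h.\rho(gU),
\]
which is exactly the condition of Definition \ref{Def_Equivariance}.

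There is no real obstacle here; the statement is the standard orbit--stabiliser equivariance, and the only thing one has to take care of is that the permutation representation of Remark \ref{Rem_TriangleGroupActsOnFlags} is genuinely defined on $T_d$ (so that talking about $\Stab_{T_d}(f)$ and cosets makes sense). This is precisely what the degree--$d$ hypothesis guarantees, because the defining relations of $T_d$ — namely $a^2, b^2, c^2, (ab)^3, (ac)^2, (bc)^d$ — correspond exactly to the cycle--length conditions on $\V{\alpha}, \E{\beta}, \F{\gamma}, \V{\alpha}\E{\beta}, \V{\alpha}\F{\gamma}, \E{\beta}\F{\gamma}$ encoded in Definition \ref{Def_DegreeDSurface} and Corollary \ref{Cor_OrbitLengthsOfDegreeDSurface}.
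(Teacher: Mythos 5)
Your proposal is correct and matches the paper's intent: the paper treats this corollary as an instance of the standard fact (cited from \cite{NeuStoyThomp94GroupsGeometry}) that any transitive action is equivariant to a coset action, and your argument is precisely the usual orbit--stabiliser verification of well--definedness, bijectivity, and equivariance, with transitivity supplied by Remark \ref{Rem_TriangleGroupActsOnFlags}. Nothing is missing.
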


Every class of isomorphic degree--$d$--surfaces defines a conjugacy class
of subgroups. Our first goal is to classify all subgroups that
correspond to a degree--$d$--surface.

\begin{definition}\label{Def_SurfaceSubgroup}
    A subgroup $U \leq T_d$ is called \textbf{surface subgroup} if the
    coset action of $T_d$ on $\rcosets{T_d}{U}$ is equivariant
    to the action of $T_d$ on the flags of a degree--$d$--surface.
\end{definition}

\begin{lemma}\label{Lem_SurfaceSubgroupRegularityCharacterisation}
    The subgroup $U \leq T_d$ is a surface subgroup if and only if the groups
    $\langle a,b\rangle$, $\langle a,c\rangle$, and $\langle b,c\rangle$ act
    regularly on their coset orbits in $\rcosets{T_d}{U}$.

    In this case $(\rcosets{T_d}{U}, a, b, c)$ is a degree--$d$--surface.
\end{lemma}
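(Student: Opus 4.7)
The key arithmetic input is that the three two-generator subgroups $\langle a,b\rangle$, $\langle a,c\rangle$, $\langle b,c\rangle$ of $T_d$ are dihedral of orders $6$, $4$, and $2d$ respectively, matching the orbit sizes from Corollary~\ref{Cor_OrbitLengthsOfDegreeDSurface}. I would use throughout the standard reformulation that a group acts regularly on an orbit if and only if the orbit size equals the group order, equivalently, all point stabilizers on that orbit are trivial.

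For the forward direction, I would start from the equivariance $\rho : \rcosets{T_d}{U} \to \mathcal{F}$ provided by the definition of a surface subgroup. It carries orbits of $\langle a,b\rangle$ on $\rcosets{T_d}{U}$ bijectively onto orbits of $\langle \V{\alpha}, \E{\beta}\rangle$ on $\mathcal{F}$, and likewise for the other two pairs of generators. By Corollary~\ref{Cor_OrbitLengthsOfDegreeDSurface} these orbits have sizes $6$, $4$, and $2d$, so each equals the corresponding group order, giving regularity.

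For the converse, I would verify directly that $(\rcosets{T_d}{U}, a, b, c)$ satisfies Definition~\ref{Def_Surface}; the identity on cosets then witnesses that $U$ is a surface subgroup. Since $a, b, c$ square to $1$ in $T_d$, they act as involutions on the cosets. A fixed point of $a$ would give a nontrivial stabilizer inside $\langle a, b\rangle$ for some coset, contradicting regularity of $\langle a, b\rangle$; the same argument rules out fixed points of $b$ and $c$. Transitivity of $\langle a, b, c\rangle = T_d$ on $\rcosets{T_d}{U}$ is automatic. The relation $(ab)^3 = 1$ forces cycles of $ab$ to have length $1$ or $3$, and regularity of $\langle a,b\rangle$ eliminates the length-$1$ case, leaving only $3$-cycles. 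The analogous argument using $(ac)^2 = 1$ handles $ac$. For the degree condition, $\langle bc\rangle$ has order $d$ in $T_d$ and is contained in $\langle b, c\rangle$; since the latter acts freely on its orbits, so does $\langle bc\rangle$, forcing every $bc$-cycle to have length exactly $d$.

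I do not expect serious obstacles. The one spot where regularity is indispensable, and thus where attention is warranted, is that the Coxeter relations alone only constrain cycle lengths of $ab$, $ac$, and $bc$ to divide $3$, $2$, and $d$ respectively; excluding shorter cycles — in particular fixed points of the three involutions and proper subdivisors of $d$ for $bc$ — is exactly the content of the regularity hypothesis on each of the three dihedral subgroups.
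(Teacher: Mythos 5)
Your proposal is correct and follows essentially the same route as the paper: dihedral orders $6$, $4$, $2d$ plus the equivariance give regularity in the forward direction, and in the converse the Coxeter relations bound the cycle lengths while regularity excludes the shorter cycles (fixed points of $a,b,c$, short cycles of $ab$, $ac$, and proper divisors of $d$ for $bc$). The only cosmetic difference is that you justify regularity by matching orbit sizes from Corollary \ref{Cor_OrbitLengthsOfDegreeDSurface} with the group orders, whereas the paper argues directly that $\langle a,b\rangle$ acts as the regularly-acting $\langle\V{\alpha},\E{\beta}\rangle$ does; both rest on the same facts.
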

\begin{proof}
    The subgroups $\langle a,b\rangle$, $\langle a,c\rangle$, and 
    $\langle b,c\rangle$ are dihedral groups of orders 6, 4, and $2d$,
    respectively. For example
    $\langle a,b\rangle \cong \langle x,y \mid x^2, y^2, (xy)^3\rangle \cong D_6$.

    Suppose first that $U \leq T_d$ is a surface subgroup.
    Then, there is a degree--$d$--surface 
    $(\mathcal{F},\V{\alpha},\E{\beta},\F{\gamma})$ 
    such that the actions of $T_d$
    on $\mathcal{F}$ and $\rcosets{T_d}{U}$ are equivariant.
    Since $\langle a,b\rangle$ acts on $\rcosets{T_d}{U}$ as
    $\langle \V{\alpha},\E{\beta}\rangle$ acts on $\mathcal{F}$
    (by Remark \ref{Rem_TriangleGroupActsOnFlags}), and
    $\langle \V{\alpha},\E{\beta}\rangle$ acts regularly on each 
    of its orbits, 
    $\langle a,b\rangle$ also acts regularly.
    Similar arguments apply to $\langle a,c\rangle$
    and $\langle b,c\rangle$.

    Conversely, we show that $(\rcosets{T_d}{U},a,b,c)$ is a
    degree--$d$--surface.
    \begin{itemize}
        \item Since $\langle a,b,c\rangle = T_d$, it acts transitively
            on $\rcosets{T_d}{U}$.
        \item By the definition of $T_d$, the elements $a$, $b$, and 
            $c$ are involutions.
            
            If $a$ fixed a coset $gU$, the orbits of $gU$ under 
            $\langle a,b\rangle$ would have size at most 2. Then,
            $\langle a,b\rangle$ could not be acting regularly on this
            orbit. Therefore, $a$ cannot fix a coset. The same
            argument applies to $b$ and $c$ as well.
        \item By the definition of $T_d$, we have $(ab)^3 = 1$.
            If there was a coset $gU$ such that $(ab)^k.gU = gU$ for
            some $0 < k < 3$, the group
            $\langle a,b\rangle$ would not be acting regularly on
            the orbit $\langle a,b\rangle.gU$. Therefore,
            $ab$ has only 3--cycles.

            The same argument applies to $ac$ and
            $bc$. \qedhere
    \end{itemize}
\end{proof}

We can characterise surface subgroups without reference to coset actions.
\begin{lemma}\label{Lem_SurfaceSubgroupIntersectionCharacterisation}
    A subgroup $U \leq T_d$ is a surface subgroup if and only if
    $gUg^{-1} \cap X = \{1\}$ for all $g \in T_d$ and all
    $X \in \{ \langle a, b\rangle, \langle a, c\rangle, \langle b, c\rangle \}$.
\end{lemma}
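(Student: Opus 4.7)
My plan is to reduce this to Lemma \ref{Lem_SurfaceSubgroupRegularityCharacterisation}, which already characterises surface subgroups as those $U$ for which the three dihedral subgroups $\langle a,b\rangle$, $\langle a,c\rangle$, $\langle b,c\rangle$ act regularly on each of their orbits in $\rcosets{T_d}{U}$. Call any of these subgroups $X$. It therefore suffices to show that $X$ acts regularly on each of its orbits on $\rcosets{T_d}{U}$ if and only if $gUg^{-1}\cap X=\{1\}$ for every $g\in T_d$.

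The key step is to compute the point stabiliser of a coset $gU$ in $X$ under the left action $x.gU=xgU$. An element $x\in X$ fixes $gU$ exactly when $g^{-1}xg\in U$, i.e.\ when $x\in gUg^{-1}$. Hence
\begin{equation*}
    \Stab_X(gU)=X\cap gUg^{-1}.
\end{equation*}
Since a group acts regularly on an orbit precisely when every point stabiliser in that orbit is trivial, $X$ acts regularly on all of its orbits in $\rcosets{T_d}{U}$ if and only if $X\cap gUg^{-1}=\{1\}$ for every $g\in T_d$.

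Applying this to each of the three choices $X\in\{\langle a,b\rangle,\langle a,c\rangle,\langle b,c\rangle\}$ separately and combining with Lemma \ref{Lem_SurfaceSubgroupRegularityCharacterisation} yields the claimed characterisation. I do not expect any real obstacle here: the only thing to be careful about is the direction of conjugation dictated by a left action (a right action would give $g^{-1}Ug$ instead), but since the condition is required for all $g\in T_d$, replacing $g$ by $g^{-1}$ shows that the two formulations are equivalent anyway.
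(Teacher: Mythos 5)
Your argument is correct and matches the paper's own proof: both reduce to Lemma \ref{Lem_SurfaceSubgroupRegularityCharacterisation} and observe that the stabiliser of a coset $gU$ in $X$ under the left action is $X \cap gUg^{-1}$, so regularity on each orbit is equivalent to all these intersections being trivial. The remark about the direction of conjugation being immaterial (quantifying over all $g$) is a fine additional observation but not needed.
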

\begin{proof}
    We apply the characterisation from Lemma 
    \ref{Lem_SurfaceSubgroupRegularityCharacterisation}.

    The group $\langle a,b\rangle$ acts non--regularly if any 
    $1 \neq x \in \langle a,b\rangle$ fixes a coset $gU$, i.\,e.
    $x.gU = gU$. This is equivalent to $g^{-1}xg \in U$, or
    $x \in gUg^{-1}$. Therefore, the action is non--regular if and only if
    $gUg^{-1} \cap \langle a,b\rangle \neq \{1\}$. The arguments for
    $\langle a,c\rangle$ and $\langle b,c\rangle$ are similar.
\end{proof}

\subsection{Geodesic triangle groups}\label{Subsect_GeodesicTriangleGroups}
Our next goal is to characterise which surface subgroups correspond to
geodesic self--dual degree--$d$--surfaces.
\begin{lemma}\label{Lem_SurfaceSubgroupsThatAreGeodesicSelfDualImplyNormalSubgroup}
    Let $U \leq T_d$ be a surface subgroup such that
    $(\rcosets{T_d}{U},a,b,c)$
    is a geodesic self--dual degree--$d$--surface. 
    The normal closure
    $\llangle (bac)^d \rrangle$ is contained in $U$.
\end{lemma}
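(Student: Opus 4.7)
The plan is to use the self--duality hypothesis to translate the ``$d$--cycles of $(bc)$'' condition from the original surface into a ``$d$--cycles of $(bac)$'' condition, and then to convert the resulting trivial action of $(bac)^d$ on $\rcosets{T_d}{U}$ into membership of all its conjugates in $U$.

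First, I would observe that the geodesic dual $S^\# = (\rcosets{T_d}{U}, a, b, ac)$ is, by Definition \ref{Def_GeodesicDual}, a surface, and because $S$ is geodesic self--dual there is an isomorphism $\phi: S \to S^\#$. Since $\phi$ is in particular a bijection conjugating the triple $(a,b,c)$ to $(a,b,ac)$, the cycle structure of $bc$ on $\rcosets{T_d}{U}$ equals that of $b \cdot (ac) = bac$. By Corollary \ref{Cor_OrbitLengthsOfDegreeDSurface} (applied to the original degree--$d$--surface $S$), $bc$ acts with $d$--cycles only, so $bac$ acts with $d$--cycles only as well. In particular $(bac)^d$ acts trivially on $\rcosets{T_d}{U}$.

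Next, I would convert this triviality of the action into the desired subgroup inclusion. For every $g \in T_d$, triviality gives $(bac)^d \cdot gU = gU$, which is equivalent to $g^{-1} (bac)^d g \in U$. Hence $(bac)^d$ lies in the \emph{core} $\bigcap_{g \in T_d} g U g^{-1}$ of $U$ in $T_d$, which is a normal subgroup of $T_d$ contained in $U$. Since $\llangle (bac)^d \rrangle$ is by definition the smallest normal subgroup of $T_d$ containing $(bac)^d$, we conclude $\llangle (bac)^d \rrangle \leq U$.

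There is no real obstacle here; the only point that needs a moment of care is justifying the cycle--structure transfer under $\phi$, which follows directly from the isomorphism condition $\phi^{-1} b \phi = b$ and $\phi^{-1} c \phi = ac$ in Definition \ref{Def_Isomorphism}, giving $\phi^{-1}(bc)\phi = b \cdot ac = bac$ and hence identical cycle types. Everything else is the standard equivalence between pointwise stabilisers and conjugates of $U$ under the coset action.
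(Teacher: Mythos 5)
Your proposal is correct and follows essentially the same route as the paper: the paper's terse appeal to "by self--duality" is exactly your conjugation argument $\phi^{-1}(bc)\phi = bac$, giving that $(bac)^d$ acts trivially on $\rcosets{T_d}{U}$, and the conclusion $g^{-1}(bac)^d g \in U$ for all $g \in T_d$ is the same core/normal-closure observation. You have merely spelled out the details that the paper leaves implicit.
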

\begin{proof}
    $(bc)^d$ acts trivially on each coset $gU$ (for $g \in T_d$). By self--duality,
    $(bac)^d$ also acts trivially on $gU$.
    In other words, $(bac)^dgU = gU$, or $g^{-1}(bac)^dg \in U$ for all
    $g \in T_d$.
\end{proof}

Since the normal subgroup
$\llangle(bac)^d\rrangle$ is always contained in surface subgroups
of geodesic self--dual degree--$d$--surfaces, we can factor it out.
\begin{definition}\label{Def_GeodesicTriangleGroup}
    For every $d \in \N$, 
    the \textbf{geodesic triangle group} is defined as
    \begin{equation}
        H_d \isdef \langle a,b,c \mid a^2, b^2, c^2, (ab)^3, (ac)^2, (bc)^d, (bac)^d \rangle.
    \end{equation}
\end{definition}
These groups already appear in
\cite[Section 8.6]{CoxeterMoser_GeneratorsRelationsGroups}, under
the name $\{d,3\}_d$.

\begin{remark}\label{Rem_LatticeIsomorphism}
    There is a lattice isomorphism 
    $\{ \llangle (bac)^d \rrangle \leq U \leq T_d\} \to \{ V \leq H_d \}$.
    Since $\llangle (bac)^d\rrangle$ acts trivially on $\rcosets{T_d}{U}$,
    it is sufficient to consider the action of $T_d/\llangle (bac)^d\rrangle$
    on $\rcosets{T_d}{U}$. This action is equivariant to
    the action 
    of $H_d$ on $\rcosets{H_d}{V}$ 
    (with $V = U/\llangle (bac)^d\rrangle$).
    Therefore, $(\rcosets{T_d}{U},a,b,c)$ and $(\rcosets{H_d}{V},a,b,c)$ describe
    the same degree--$d$--surface.
\end{remark}
Geodesic duality can be formulated on the level of surface subgroups.

\begin{remark}\label{Rem_ProofGeodesicAutomorphism}
    We can define a group homomorphism $\#: H_d \to H_d$ by 
    \begin{align*}
        a \mapsto a && b \mapsto b && c \mapsto ac.
    \end{align*}
    To see that it is well--defined, 
    let $F$ be the free group generated by $\bar{a}$, $\bar{b}$, and $\bar{c}$.
    Then,
    \begin{equation*}
        \bar{\#}: F \to H_d, 
            \qquad \bar{a}\mapsto a,\quad \bar{b}\mapsto b,\quad \bar{c}\mapsto ac
    \end{equation*}
    is a well--defined group homomorphism. We
    consider its kernel. Since $\bar{\#}(\bar{a}) = a$ and 
    $\bar{\#}(\bar{b}) = b$, we immediately get
    $\llangle \bar{a}^2, \bar{b}^2, (\bar{a}\bar{b})^3 \rrangle \leq \ker{\bar{\#}}$.

    $\bar{c}^2$ and $(\bar{a}\bar{c})^2$ are mapped to $(ac)^2 = 1$ and $c^2 = 1$, so
    both lie in the kernel of $\bar{\#}$.
    An analogous argument
    shows that $(\bar{b}\bar{c})^d$ and $(\bar{b}\bar{a}\bar{c})^d$ lie in $\ker\bar{\#}$.
    Thus, $\bar{\#}$ factors over the normal subgroup generated by
    these relations (which gives $\#$).
\end{remark}
\begin{definition}\label{Def_GeodesicAutomorphism}
    The group homomorphism $\#$ from Remark \ref{Rem_ProofGeodesicAutomorphism} is
    called
    \textbf{geodesic automorphism}.
    For $g \in H_d$ and $V \leq H_d$, we employ the notation
    $\geoAut{g} \isdef \#(g)$ and $\geoAut{V} \isdef \{ \geoAut{g} \mid g \in V \}$.
\end{definition}

Now, we can characterise self--dual degree--$d$--surfaces group--theoretically.
\begin{proposition}\label{Prop_GeodesicDualOfDegreeDSurface}
    Let $V \leq H_d$ such that $S = (\rcosets{H_d}{V},a,b,c)$ is a 
    degree--$d$--surface. Then its geodesic dual is given by
    $(\rcosets{H_d}{\geoAut{V}},a,b,c)$.

    In particular, $S$ is geodesic self--dual if and only if $\geoAut{V}$ is 
    conjugate to $V$ in $H_d$.
\end{proposition}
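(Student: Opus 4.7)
The plan is to observe that the geodesic automorphism $\#$ is the right tool to convert the nonstandard third involution $\V{\alpha}\F{\gamma}$ of the geodesic dual into a standard coset action with $c$, merely at the cost of replacing the subgroup $V$ by $\geoAut{V}$. The bulk of the proof is then the construction of an explicit isomorphism, plus an appeal to the standard dictionary between conjugacy classes of point stabilisers and isomorphism classes of transitive actions.

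First I would verify that $\#$ is in fact an automorphism of $H_d$ of order two. One checks on generators that $\#^2(a) = a$, $\#^2(b) = b$, and $\#^2(c) = \#(ac) = a \cdot ac = a^2c = c$, so $\#^2 = \mathrm{id}$. In particular, $\geoAut{V}$ is a subgroup of $H_d$ and $\#$ descends to a bijection $\rcosets{H_d}{V} \to \rcosets{H_d}{\geoAut{V}}$, $gV \mapsto \geoAut{g}\,\geoAut{V}$.

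Next I would construct the isomorphism $\phi: S^\# \to (\rcosets{H_d}{\geoAut{V}},a,b,c)$ by setting $\phi(gV) \isdef \geoAut{g}\,\geoAut{V}$. By Definition \ref{Def_GeodesicDual}, $S^\#$ is the surface $(\rcosets{H_d}{V},a,b,ac)$ (because $\V{\alpha}\F{\gamma}$ corresponds to left multiplication by $ac$). To verify the isomorphism conditions of Definition \ref{Def_Isomorphism}, I would compute directly that for any $gV \in \rcosets{H_d}{V}$,
\begin{align*}
    \phi(a.gV) = \geoAut{ag}\,\geoAut{V} = a \cdot \phi(gV), \qquad \phi(b.gV) = b \cdot \phi(gV),
\end{align*}
while the crucial case
\begin{equation*}
    \phi((ac).gV) = \geoAut{acg}\,\geoAut{V} = a \cdot ac \cdot \geoAut{g}\,\geoAut{V} = c \cdot \phi(gV)
\end{equation*}
uses precisely that $\geoAut{c} = ac$ and $a^2 = 1$. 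Well-definedness and bijectivity follow immediately from $\#$ being an automorphism. This proves the first assertion.

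For the ``in particular'' statement I would invoke the standard fact that two transitive left $H_d$--actions on coset spaces $\rcosets{H_d}{V_1}$ and $\rcosets{H_d}{V_2}$ are equivariant if and only if $V_1$ and $V_2$ are conjugate (the point stabiliser of $gV_i$ being $gV_ig^{-1}$); since an isomorphism of surfaces in the sense of Definition \ref{Def_Isomorphism} is exactly such an equivariance, $S \cong S^\#$ is equivalent to $V$ being conjugate to $\geoAut{V}$. The main obstacle is really just the bookkeeping in verifying $\phi$ intertwines the third involution; once $\#$ has been recognised as an order-two automorphism sending $c$ to $ac$, everything falls out by direct computation.
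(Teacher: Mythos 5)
Your proof is correct and follows essentially the same route as the paper: both use the automorphism $\#$ to induce the bijection $gV \mapsto \geoAut{g}\,\geoAut{V}$ between coset spaces, check that it intertwines the involutions (you verify the three generator conditions of Definition \ref{Def_Isomorphism}, the paper verifies the equivalent $H_d$--equivariance), and then invoke the standard correspondence between conjugacy of point stabilisers and equivalence of transitive coset actions. Your explicit observation that $\#^2 = \mathrm{id}$ is a welcome clarification of a fact the paper's computation uses only implicitly.
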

\begin{proof}
    The geodesic dual of $S$ is $(\rcosets{H_d}{V},a,b,ac)$. This corresponds to the
    action of $H_d$ on the cosets of $V$ in $H_d$ via
    \begin{equation*}
        \phi: H_d \times \rcosets{H_d}{V} \to \rcosets{H_d}{V}, \qquad (h,tV) \mapsto \geoAut{h}tV.
    \end{equation*}
    We want to show that this action is equivariant to
    \begin{equation*}
        \psi: H_d \times \rcosets{H_d}{\geoAut{V}} \to \rcosets{H_d}{\geoAut{V}}, \qquad (h,t\geoAut{V}) \mapsto ht\geoAut{V}.
    \end{equation*}
    Since $\#$ is an automorphism of $H_d$, we have a bijection
    \begin{equation*}
        \#: \rcosets{H_d}{V} \to \rcosets{H_d}{\geoAut{V}}, \qquad tV \mapsto \geoAut{t}\geoAut{V}.
    \end{equation*}
    By Definition \ref{Def_Equivariance},
    we have to show that $\geoAut{\phi(h,tV)} = \psi( h,\geoAut{(tV)} )$:
    \begin{equation*}
        \geoAut{\phi(h,tV)} = \geoAut{ (\geoAut{h}tV) } = h\geoAut{(tV)} = \psi( h,\geoAut{(tV)}).
    \end{equation*}
    Therefore, the geodesic dual is given by $(\rcosets{H_d}{\geoAut{V}},a,b,c)$.
    This degree--$d$--surface is isomorphic to $(\rcosets{H_d}{V},a,b,c)$ if and
    only if $\geoAut{V}$ and $V$ are conjugate in $H_d$.
\end{proof}

\begin{corollary}\label{Cor_SelfDualSurfacesIntermediateCharacterisation}
    Let $U \leq T_d$. Then, $(\rcosets{T_d}{U},a,b,c)$ is a geodesic
    self--dual degree--$d$--surface if and only if
    \begin{itemize}
        \item $gUg^{-1} \cap X = \{1\}$ for all $g \in T_d$ and 
            $X \in \{\langle a,b\rangle, \langle a,c\rangle, \langle b,c\rangle\}$.
        \item $\llangle (bac)^d\rrangle \leq U$.
        \item $\geoAut{( U/\llangle (bac)^d\rrangle )}$ is conjugate
            to $U/\llangle (bac)^d\rrangle$ in $H_d$.
    \end{itemize}
    Furthermore, every geodesic self--dual degree--$d$--surface has this form.
\end{corollary}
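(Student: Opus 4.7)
The plan is to assemble the corollary from the three preceding results — Lemma~\ref{Lem_SurfaceSubgroupIntersectionCharacterisation}, Lemma~\ref{Lem_SurfaceSubgroupsThatAreGeodesicSelfDualImplyNormalSubgroup}, and Proposition~\ref{Prop_GeodesicDualOfDegreeDSurface} — interpolated through the lattice isomorphism of Remark~\ref{Rem_LatticeIsomorphism}. None of the three bullets is new; the task is simply to verify that the trio together is equivalent to geodesic self--duality, and then to check the ``furthermore'' clause.

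For the forward direction I would assume $(\rcosets{T_d}{U},a,b,c)$ is a geodesic self--dual degree--$d$--surface. Since it is in particular a degree--$d$--surface, $U$ is a surface subgroup in the sense of Definition~\ref{Def_SurfaceSubgroup}, so Lemma~\ref{Lem_SurfaceSubgroupIntersectionCharacterisation} delivers the first bullet. The second bullet is precisely Lemma~\ref{Lem_SurfaceSubgroupsThatAreGeodesicSelfDualImplyNormalSubgroup}. For the third bullet, I would use Remark~\ref{Rem_LatticeIsomorphism} to set $V \isdef U/\llangle (bac)^d\rrangle \leq H_d$ and identify $(\rcosets{T_d}{U},a,b,c)$ with $(\rcosets{H_d}{V},a,b,c)$; then Proposition~\ref{Prop_GeodesicDualOfDegreeDSurface} says that self--duality of the latter is equivalent to $\geoAut{V}$ and $V$ being conjugate in $H_d$.

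The converse is read off the same three results in reverse. Given the first bullet, Lemma~\ref{Lem_SurfaceSubgroupIntersectionCharacterisation} shows $U$ is a surface subgroup, so $(\rcosets{T_d}{U},a,b,c)$ is a degree--$d$--surface. The second bullet allows me to invoke Remark~\ref{Rem_LatticeIsomorphism} and pass to $V = U/\llangle (bac)^d\rrangle$, obtaining the same surface as $(\rcosets{H_d}{V},a,b,c)$. The third bullet combined with Proposition~\ref{Prop_GeodesicDualOfDegreeDSurface} then yields geodesic self--duality.

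Finally, for the ``furthermore'' clause I would argue that every geodesic self--dual degree--$d$--surface $S = (\mathcal{F},\V{\alpha},\E{\beta},\F{\gamma})$ is equivariant, via Corollary~\ref{Cor_SurfaceAsCosetAction}, to a coset action $(\rcosets{T_d}{U},a,b,c)$ where $U = \Stab_{T_d}(f)$ for any chosen $f \in \mathcal{F}$; the three bullets then hold by the forward direction already proved. No step is genuinely difficult here — the only care needed is in the third bullet, where one must be attentive that Remark~\ref{Rem_LatticeIsomorphism} permits identifying the action on $\rcosets{T_d}{U}$ with that on $\rcosets{H_d}{V}$ so that Proposition~\ref{Prop_GeodesicDualOfDegreeDSurface} (which is stated in $H_d$) is actually applicable.
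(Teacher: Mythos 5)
Your proposal is correct and follows essentially the same route as the paper: the first bullet via Lemma~\ref{Lem_SurfaceSubgroupIntersectionCharacterisation}, the second via Lemma~\ref{Lem_SurfaceSubgroupsThatAreGeodesicSelfDualImplyNormalSubgroup}, the passage to $V = U/\llangle (bac)^d\rrangle$ via Remark~\ref{Rem_LatticeIsomorphism}, the third bullet via Proposition~\ref{Prop_GeodesicDualOfDegreeDSurface}, and the ``furthermore'' clause via Corollary~\ref{Cor_SurfaceAsCosetAction} with Definition~\ref{Def_SurfaceSubgroup}. You merely spell out the two directions more explicitly than the paper's terse version; no substantive difference.
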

\begin{proof}
    From Corollary \ref{Cor_SurfaceAsCosetAction} and Definition
    \ref{Def_SurfaceSubgroup} we deduce that every degree--$d$--surface 
    corresponds to a surface subgroup $U \leq T_d$ and can
    be represented as $S = (\rcosets{T_d}{U},a,b,c)$. Surface subgroups
    are characterised in Lemma 
    \ref{Lem_SurfaceSubgroupIntersectionCharacterisation}. This gives
    the first condition of the statement.

    If $S$ is geodesic self--dual,
    Lemma \ref{Lem_SurfaceSubgroupsThatAreGeodesicSelfDualImplyNormalSubgroup}
    gives the necessary condition $\llangle (bac)^d\rrangle \leq U$. This
    condition allows the reduction to $V \isdef U/\llangle(bac)^d\rrangle$ in 
    $H_d$ by Remark \ref{Rem_LatticeIsomorphism}.

    Then, Proposition \ref{Prop_GeodesicDualOfDegreeDSurface} gives the
    final condition of the statement.
\end{proof}
Since every element of the dihedral group 
$\langle x,y \mid x^2, y^2, (xy)^k\rangle$ is conjugate
to $x$, $y$ or $(xy)^m$ (with $m$ dividing $k$), 
we can replace the sets $X$ from Corollary 
\ref{Cor_SelfDualSurfacesIntermediateCharacterisation} by
\begin{equation}\label{Eq_SingleGeneratorGroups}
    X \in \{ \langle a\rangle, \langle b\rangle, \langle c\rangle, \langle ab\rangle, \langle ac\rangle, \langle bc\rangle \}.
\end{equation}

\section{Reduction to geodesic triangle groups}\label{Sect_GeodesicTriangleGroup}
Corollary \ref{Cor_SelfDualSurfacesIntermediateCharacterisation}
characterises geodesic self--dual degree--$d$--surfaces by considering
both groups,
$T_d$ and $H_d$. We would like to have a characterisation in which only
$H_d$ appears, since $H_d$ is often smaller than $T_d$.
\begin{remark}\label{Rem_SelfDualUniversalCoverGroupSizes}
    The groups $T_d$ and $H_d$ have the following orders:
    \begin{center}
        \begin{tabular}{l|cccccccccc}
            d       & 1 & 2     & 3 & 4 & 5     & 6 & 7 & 8                     & 9 & $\geq 10$ \\ \hline
            $|T_d|$ & 2 & 12    & 24 & 48 & 120 &$\infty$ & $\infty$ & $\infty$ & $\infty$ & $\infty$ \\ \hline
            $|H_d|$ & 1 & 4     & 1 & 4 & 60    & 108  & 1 & 672                & 3420 & $\infty$ \\
        \end{tabular}
    \end{center}
\end{remark}
\begin{proof}
    The finiteness results can be calculated very easily in GAP (\cite{GAP4}).
    They can also be found in
    \cite[Table 8]{CoxeterMoser_GeneratorsRelationsGroups}, if one uses the
    notation $H_d = \{d,3\}_d$.

    For $H_d$, Edjvet and Juhász show all of the results in \cite{EdjvetJuhaszGmnp}. In 
    comparison to our notation, the roles of $b$ and $c$ are interchanged. 
    Therefore, the parameters in their paper are set as follows: $m\isdef3$ and 
    $n=p\isdef d$.
\end{proof}

We conclude that there are no geodesic self--dual
degree--$d$--surfaces for $d \in \{3,4,7\}$.
\begin{corollary}\label{Cor_NoGeodesicSelfDualFor347}
    Let $U \leq T_d$ and 
    $\llangle (bac)^d\rrangle \cap \langle bc\rangle \neq \{1\}$.
    Then, $(\rcosets{T_d}{U},a,b,c)$ is not a geodesic
    self--dual degree--$d$--surface.

    In particular, there is no geodesic self--dual degree--$d$--surface
    for $d \in \{3,4,7\}$.
\end{corollary}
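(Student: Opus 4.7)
The plan is to derive the first statement by combining the two necessary conditions supplied by Corollary \ref{Cor_SelfDualSurfacesIntermediateCharacterisation}, and then to verify the hypothesis $\llangle(bac)^d\rrangle \cap \langle bc\rangle \neq \{1\}$ for the three exceptional values $d \in \{3,4,7\}$ by reading off orders from Remark \ref{Rem_SelfDualUniversalCoverGroupSizes}.

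For the general statement I would argue by contradiction: if $(\rcosets{T_d}{U},a,b,c)$ were a geodesic self--dual degree--$d$--surface, Corollary \ref{Cor_SelfDualSurfacesIntermediateCharacterisation} would force both $\llangle(bac)^d\rrangle \leq U$ and, specialising the intersection condition to $g = 1$ and $X = \langle b,c\rangle$, $U \cap \langle b,c\rangle = \{1\}$. Since $\langle bc\rangle \leq \langle b,c\rangle$, this yields the chain
\[
 \{1\} \;\neq\; \llangle(bac)^d\rrangle \cap \langle bc\rangle \;\leq\; U \cap \langle bc\rangle \;\leq\; U \cap \langle b,c\rangle \;=\; \{1\},
\]
a contradiction.

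For the three special values of $d$, I would use that $bc$ has order exactly $d$ in $T_d$ (since $T_d$ is the $(2,3,d)$ Coxeter group), so proving $\llangle(bac)^d\rrangle \cap \langle bc\rangle \neq \{1\}$ amounts to showing that the image of $bc$ in $H_d = T_d/\llangle(bac)^d\rrangle$ has order strictly less than $d$. For $d \in \{3,7\}$, Remark \ref{Rem_SelfDualUniversalCoverGroupSizes} records $|H_d| = 1$, so the image of $bc$ is trivially $1$, and the non-trivial $bc \in T_d$ itself lies in $\llangle(bac)^d\rrangle$. For $d = 4$ the remark gives $|H_4| = 4$; a cyclic group of order $4$ cannot be generated by involutions (its unique non-trivial involution generates only a subgroup of order $2$), so $H_4 \cong \Z/2 \times \Z/2$ has exponent $2$, and hence the image of $bc$ has order at most $2 < 4$. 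Thus $(bc)^2$ witnesses a non-trivial element of $\llangle(bac)^4\rrangle \cap \langle bc\rangle$.

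The main obstacle is nothing more than the tiny case-by-case check for $d \in \{3,4,7\}$, whose only subtlety is the observation that $H_4$ cannot be cyclic. All the conceptual machinery has already been assembled in Corollary \ref{Cor_SelfDualSurfacesIntermediateCharacterisation}, so the present corollary is a short deduction from that result combined with the tabulated orders.
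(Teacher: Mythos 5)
Your proposal is correct, and its core is the same contradiction argument as the paper's: self--duality forces $\llangle(bac)^d\rrangle \leq U$ (the paper invokes Lemma \ref{Lem_SurfaceSubgroupsThatAreGeodesicSelfDualImplyNormalSubgroup} and Lemma \ref{Lem_SurfaceSubgroupIntersectionCharacterisation} directly, you route through Corollary \ref{Cor_SelfDualSurfacesIntermediateCharacterisation}, which packages exactly those two facts), and then $\{1\} \neq \llangle(bac)^d\rrangle \cap \langle bc\rangle \leq U \cap \langle b,c\rangle$ contradicts the surface--subgroup condition. The cases $d \in \{3,7\}$ are also handled identically via $H_d = \{1\}$. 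Where you genuinely diverge is $d = 4$: the paper exhibits the explicit identity $c(bac)^4cb(bac)^4b = (cb)^2$, verified by GAP or a Coxeter--group calculation, whereas you argue structurally that $|H_4| = 4$ together with generation by the involutions $a,b,c$ rules out the cyclic group, so $H_4 \cong \Z/2 \times \Z/2$ has exponent $2$ and the image of $bc$ collapses to order at most $2$, putting $(bc)^2$ in $\llangle(bac)^4\rrangle$. Your route avoids any further machine or word computation beyond the tabulated order $|H_4| = 4$ from Remark \ref{Rem_SelfDualUniversalCoverGroupSizes}, but it does lean on the standard fact that $bc$ has order exactly $d$ in the Coxeter group $T_d$ (needed so that $(bc)^2 \neq 1$ in $T_4$, and so that $bc \neq 1$ for $d \in \{3,7\}$); note the paper's version tacitly needs the same fact, and it can be justified in the paper's own style via the dihedral quotient $T_d/\llangle a\rrangle$ of order $2d$ used in Lemma \ref{Lem_TrivialIntersectionWithNormalSubgroup}, so this is a reasonable trade: a slightly more conceptual case analysis in exchange for one standard Coxeter--theoretic input.
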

\begin{proof}
    If $(\rcosets{T_d}{U},a,b,c)$ was a geodesic self--dual
    degree--$d$--surface, Lemma 
    \ref{Lem_SurfaceSubgroupsThatAreGeodesicSelfDualImplyNormalSubgroup} 
    would give
    $\llangle (bac)^d\rrangle \leq U$. But
    $U \cap \langle bc\rangle \geq \llangle (bac)^d\rrangle \cap \langle bc\rangle \neq \{1\}$,
    which contradicts the characterisation of surface subgroups
    in Lemma \ref{Lem_SurfaceSubgroupIntersectionCharacterisation}.

    Since $H_3 = H_7 = \{1\}$, we have $\llangle (bac)^d\rrangle = T_d$ 
    in these cases. For $H_4$, it can be checked (either with GAP
    or with a calculation like in 
    \cite[Section 3.3]{BjornerBrenti_CombinatoricsOfCoxeterGroups}) that
    $c(bac)^4cb(bac)^4b=(cb)^2$.
\end{proof}

We would like to replace $gUg^{-1} \cap X = \{1\}$ for $g \in T_d$ by
$gVg^{-1} \cap X = \{1\}$ for $g \in H_d$.
\begin{lemma}\label{Lem_GroupIntersectionReduction}
    Let $G$ be a group, $W, X \leq G$ and $N \trianglelefteq G$ with
    $N \leq W$ and $X \cap N = \{1\}$. Then 
    $W \cap X \cong W/N \cap XN/N$.
\end{lemma}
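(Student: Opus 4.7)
The plan is to exhibit an explicit isomorphism
\[
\phi : W \cap X \longrightarrow (W/N) \cap (XN/N), \qquad w \longmapsto wN,
\]
namely the restriction of the canonical projection $\pi : G \to G/N$. The proof then amounts to three routine checks: that $\phi$ lands where claimed, that it is injective, and that it is surjective.

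For the first point, let $w \in W \cap X$. Then $wN \in W/N$ by definition, and since $w \in X \subset XN$ we also have $wN \in XN/N$; hence $\phi(w)$ indeed lies in the intersection. For injectivity, the kernel of $\phi$ is $(W \cap X) \cap N$. Using the hypothesis $N \leq W$, this simplifies to $X \cap N$, which is $\{1\}$ by assumption.

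The only mildly delicate step is surjectivity, and this is where I expect the main (very minor) obstacle: one must be careful that the hypothesis $N \leq W$ is actually used to pull a coset representative back into $W$. So suppose $gN \in (W/N) \cap (XN/N)$. Because $gN \in W/N$ and $N \leq W$, the coset $gN$ has a representative in $W$; replacing $g$ by that representative, we may assume $g \in W$. Because $gN \in XN/N$, we can write $g = xn$ with $x \in X$ and $n \in N$. Since $n \in N \leq W$ and $g \in W$, we conclude $x = gn^{-1} \in W$, so $x \in W \cap X$ and $\phi(x) = xN = gN$.

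Combining the three steps yields the desired isomorphism $W \cap X \cong (W/N) \cap (XN/N)$. The whole argument is essentially a careful bookkeeping exercise inside $G/N$, with the two hypotheses $N \leq W$ and $X \cap N = \{1\}$ doing all the work (the former for surjectivity, the latter for injectivity).
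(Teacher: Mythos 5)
Your proof is correct. It differs from the paper's only in presentation: the paper deduces the statement abstractly from the isomorphism theorems, via the chain $W \cap X \cong (W\cap X)/(W\cap X\cap N) \cong (W\cap X)N/N = WN/N \cap XN/N$, where the last equality is a Dedekind/modular-law identity (valid because $N \leq W$) that the paper states without further justification. You instead exhibit the restriction of the canonical projection as an explicit map and verify injectivity and surjectivity by hand; your surjectivity argument (pulling a coset representative back into $W$ and adjusting by an element of $N \leq W$) is precisely the element-wise content of that unproved equality, and your kernel computation $(W\cap X)\cap N = X\cap N = \{1\}$ matches the paper's first isomorphism. So the two proofs have the same mathematical substance, with yours being the more self-contained, elementary unwinding and the paper's being the more compact appeal to standard theorems; nothing is missing from your version, and you correctly identify where each hypothesis is used.
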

\begin{proof}
    The result follows from the homomorphism theorems:
    \begin{align*}
        W \cap X \cong (W \cap X)/(W \cap X \cap N) &\cong (W \cap X)N/N \\
        &= WN/N \cap XN/N = W/N \cap XN/N. \qedhere
    \end{align*}
\end{proof}
To apply this lemma to
$G = T_d$, $W = gUg^{-1}$, $X = X$, and $N = \llangle (bac)^d\rrangle$,
we need to show that $\llangle (bac)^d\rrangle \cap X = \{1\}$ for
all $X$ from Equation \eqref{Eq_SingleGeneratorGroups}. This is 
easy for $X \neq \langle bc\rangle$.

\begin{lemma}\label{Lem_TrivialIntersectionWithNormalSubgroup}
    $\llangle (bac)^d\rrangle \cap X = \{1\}$ for 
    $X \in \{ \langle a \rangle, \langle b\rangle, \langle c\rangle, \langle ab\rangle, \langle ac\rangle \}$
    and $d \geq 5$ with $d \neq 7$.
\end{lemma}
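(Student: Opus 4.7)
The plan is to reinterpret the five intersection statements via the canonical surjection $\pi\colon T_d \twoheadrightarrow H_d$, whose kernel is exactly $\llangle (bac)^d\rrangle$. For any subgroup $X\leq T_d$ we then have $\llangle (bac)^d\rrangle\cap X=\{1\}$ if and only if $\pi|_X$ is injective. Because $\langle a\rangle$, $\langle b\rangle$, $\langle c\rangle$, and $\langle ac\rangle$ are all of order $2$ in $T_d$, and $\langle ab\rangle$ is of order $3$ (a prime), injectivity of $\pi|_X$ reduces in each case to a single non-triviality statement in $H_d$: one needs
\[
    a\neq 1,\quad b\neq 1,\quad c\neq 1,\quad ab\neq 1,\quad ac\neq 1
\]
to hold in $H_d$.

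I would then establish each of these five inequalities by contradiction, in the same uniform way: add the failing relation to the presentation of $H_d$ and use the remaining defining relations $a^2,b^2,c^2,(ab)^3,(ac)^2$ to simplify the resulting quotient. For instance, imposing $a=b$ gives a quotient of $\langle a,c\mid a^2,c^2,(ac)^2\rangle$, of order at most $4$; imposing $a=c$ gives a quotient of $\langle a,b\mid a^2,b^2,(ab)^3\rangle\cong D_6$, of order at most $6$; imposing $a=1$ forces $b=1$ via $(ab)^3=b^3=b=1$ and leaves $\langle c\mid c^2\rangle$, of order at most $2$; and symmetric arguments cover $b=1$ and $c=1$. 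In every case the resulting quotient has order at most $6$.

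The proof then closes by appealing to the order table in Remark \ref{Rem_SelfDualUniversalCoverGroupSizes}: for $d\geq 5$ with $d\neq 7$ one has $|H_d|\in\{60,108,672,3420\}$ or $|H_d|=\infty$, in every instance strictly larger than $6$. Any failure of one of the five inequalities would force $|H_d|\leq 6$, contradicting this numerical data, and so all five inequalities hold and the required intersections are trivial.

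The only subtle point in the execution is that after imposing a failing relation, the two remaining relations $(bc)^d$ and $(bac)^d$ simplify differently depending on the parity of $d$, because $c^2=1$ and $(ac)^2=1$ automatically trivialise every even power. This forces a short parity case split in each of the five reductions, but the bound $\leq 6$ on the resulting quotient holds in every sub-case, so the conclusion is independent of parity. Beyond this bookkeeping, the entire argument is routine symbol-pushing in finitely presented groups.
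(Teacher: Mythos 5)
Your proposal is correct and is essentially the paper's own argument: the paper likewise observes that if a generator of $X$ lay in $\llangle (bac)^d\rrangle$, then $H_d$ would be a quotient of the small group obtained by killing that element in $T_d$ (a dihedral group of order at most $2d$, $4$, or $6$), contradicting the order table of Remark \ref{Rem_SelfDualUniversalCoverGroupSizes}. Your closing worry about a parity case split is unnecessary --- adding the relators $(bc)^d$ and $(bac)^d$ can only shrink the quotient further, so the crude upper bounds already suffice --- but it does not affect the correctness of the argument.
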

\begin{proof}
    If $a \in \llangle (bac)^d\rrangle$, we also have
    $\llangle a\rrangle \leq \llangle (bac)^d\rrangle$. In particular,
    \begin{equation*}
        H_d \cong T_d / \llangle (bac)^d\rrangle \cong (T_d / \llangle a\rrangle) / ( \llangle (bac)^d\rrangle / \llangle a\rrangle ).
    \end{equation*}
    Since $T_d / \llangle a\rrangle = \langle b,c \mid b^2,c^2,(bc)^d\rangle$ 
    is a dihedral group of order $2d$, we 
    conclude $|H_d| < 2d$. By Remark 
    \ref{Rem_SelfDualUniversalCoverGroupSizes}, this cannot happen for
    $d \geq 5$ and $d \neq 7$.
    Similar arguments apply to $b$ and $c$.
    We can apply the same argument to $ab$ and $ac$.
    We get $T_d / \llangle ab\rrangle \cong D_4$ and 
    $T_d / \llangle ac\rrangle \cong D_6$, so
    $\llangle (bac)^d\rrangle \cap \langle ab\rangle$ and
    $\llangle (bac)^d\rrangle \cap \langle ac\rangle$ 
    are trivial as 
    well.
\end{proof}

\begin{corollary}\label{Cor_GeodesicSurfaceSubgroupCharacterisation}
    Let $V \leq H_d$ with $d \geq 5$ and $d \neq 7$.
    Then,
    $(\rcosets{H_d}{V},a,b,c)$ is a geodesic self--dual degree--$d$--surface
    if and only if
    \begin{itemize}
        \item $gVg^{-1} \cap X = \{1\}$ for 
            $X \in \{\langle a\rangle,\langle c\rangle,\langle ab\rangle,\langle ac\rangle,\langle bc\rangle\}$
            and all $g \in H_d$.
        \item $\geoAut{V}$ is conjugate to $V$.
        \item $\llangle (bac)^d\rrangle \cap \langle bc\rangle = \{1\}$ in $T_d$.
    \end{itemize}
    Furthermore, every geodesic self--dual degree--$d$--surface has this form.
\end{corollary}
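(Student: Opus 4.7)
The plan is to apply Corollary \ref{Cor_SelfDualSurfacesIntermediateCharacterisation} (with the three dihedral groups replaced by the six cyclic subgroups of Equation \eqref{Eq_SingleGeneratorGroups}) and to translate each of its three conditions from $T_d$ to $H_d$. By the lattice isomorphism of Remark \ref{Rem_LatticeIsomorphism}, any $V \leq H_d$ corresponds uniquely to a subgroup $U \leq T_d$ with $N \isdef \llangle (bac)^d \rrangle \leq U$ and $V = U/N$, and the coset surfaces $(\rcosets{T_d}{U},a,b,c)$ and $(\rcosets{H_d}{V},a,b,c)$ coincide. Under this correspondence, the containment bullet $N \leq U$ is automatic, and the self-duality bullet translates verbatim into $\geoAut{V}$ being conjugate to $V$ in $H_d$.

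The remaining work is to translate the conjugate-intersection conditions into $H_d$. I would apply Lemma \ref{Lem_GroupIntersectionReduction} with $G = T_d$, $W = gUg^{-1}$, and $N$ as above. For $X \in \{\langle a\rangle, \langle c\rangle, \langle ab\rangle, \langle ac\rangle\}$, Lemma \ref{Lem_TrivialIntersectionWithNormalSubgroup} provides $N \cap X = \{1\}$, so the hypothesis of Lemma \ref{Lem_GroupIntersectionReduction} is met and the condition $gUg^{-1} \cap X = \{1\}$ in $T_d$ is equivalent to $gVg^{-1} \cap X = \{1\}$ in $H_d$, after identifying $X$ with its isomorphic image $XN/N$ in $H_d$. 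The subgroup $\langle b\rangle$ can then be omitted from the list because $a$ and $b$ are conjugate inside the dihedral subgroup $\langle a,b\rangle \cong S_3$, so the intersection condition for $\langle b\rangle$ is equivalent to the one for $\langle a\rangle$.

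The main obstacle is the case $X = \langle bc\rangle$, because $N \cap \langle bc\rangle$ need not be trivial. If $N \cap \langle bc\rangle = \{1\}$ in $T_d$ — which is exactly the third bullet of the statement — then the same reduction via Lemma \ref{Lem_GroupIntersectionReduction} applies and the intersection condition translates into its $H_d$ counterpart. If instead $N \cap \langle bc\rangle \neq \{1\}$, Corollary \ref{Cor_NoGeodesicSelfDualFor347} rules out every geodesic self-dual degree-$d$-surface for this $d$, so both sides of the claimed equivalence are vacuously false. Combining the three translated conditions yields the characterisation, and the ``furthermore'' clause follows from Corollary \ref{Cor_SurfaceAsCosetAction} together with Lemma \ref{Lem_SurfaceSubgroupsThatAreGeodesicSelfDualImplyNormalSubgroup}, which together guarantee that every geodesic self-dual degree-$d$-surface descends to some $V \leq H_d$ via the lattice isomorphism.
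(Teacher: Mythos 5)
Your proposal is correct and follows essentially the same route as the paper: the lattice isomorphism of Remark \ref{Rem_LatticeIsomorphism}, the intermediate characterisation of Corollary \ref{Cor_SelfDualSurfacesIntermediateCharacterisation} with the cyclic subgroups of Equation \eqref{Eq_SingleGeneratorGroups}, the reduction via Lemmas \ref{Lem_GroupIntersectionReduction} and \ref{Lem_TrivialIntersectionWithNormalSubgroup}, the conjugacy $(ab)a(ab)^{-1}=b$ to drop $\langle b\rangle$, and Corollary \ref{Cor_NoGeodesicSelfDualFor347} for the $\langle bc\rangle$ issue. Your explicit case split on whether $\llangle (bac)^d\rrangle \cap \langle bc\rangle$ is trivial is in fact a slightly cleaner account of the third bullet than the paper's terse ``by assumption'', but it is the same argument in substance.
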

\begin{proof}
    Let $(\rcosets{H_d}{V},a,b,c)$ be geodesic self--dual degree--$d$--surface.
    Then, there is a surface subgroup $U \leq T_d$ with
    $U/\llangle(bac)^d\rrangle = V$. By Corollary
    \ref{Cor_SelfDualSurfacesIntermediateCharacterisation},
    $gUg^{-1}\cap X = \{1\}$ for all $X$ in the list 
    \eqref{Eq_SingleGeneratorGroups}. By assumption and Lemma
    \ref{Lem_TrivialIntersectionWithNormalSubgroup}, we can
    apply Lemma \ref{Lem_GroupIntersectionReduction} to conclude
    $gVg^{-1} \cap X = \{1\}$.

    Conversely, there is an $U \leq T_d$ with $U/\llangle(bac)^d\rrangle=V$.
    Since $(ab)a(ab)^{-1}=b$, we also have $gVg^{-1}\cap\langle b\rangle =\{1\}$.
    Since 
    $V \cap \langle ac\rangle \cong \geoAut{(V \cap \langle ac\rangle)} = \geoAut{V}\cap \langle c\rangle = hVh^{-1}\cap\langle c\rangle$,
    for some $h \in H_d$, this intersection is also trivial. Applying
    Lemma \ref{Lem_GroupIntersectionReduction} shows that the conditions
    of Corollary \ref{Cor_SelfDualSurfacesIntermediateCharacterisation}
    are fulfilled.
\end{proof}

\section{Uncollapsed geodesic triangle groups}\label{Sect_Uncollapsed}
The characterisation of geodesic self--dual degree--$d$--surfaces in
Corollary \ref{Cor_GeodesicSurfaceSubgroupCharacterisation} contains the
assumption
$\llangle (bac)^d\rrangle \cap \langle bc\rangle = \{1\}$. In this section,
we show that this condition is not necessary. We start by rewriting it.

\begin{remark}\label{Rem_RelatorsInGeodesicTriangleGroup}
    Since $\#$ is an automorphism of $H_d$,
    $(bc)^k = 1$ if and only if $(bac)^k = 1$.
\end{remark}
\begin{lemma}\label{Lem_UncollapsedConditionForSelfDuality}
    In the triangle group $T_d$, we have
    $\llangle (bac)^d\rrangle \cap \langle bc\rangle= \langle (bc)^k\rangle$
    for $1 \leq k \leq d$
    if and only if $H_d = H_k$.
\end{lemma}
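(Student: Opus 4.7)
The plan is to translate both conditions into a single invariant, namely the order $n_d$ of the image of $bc$ in $H_d$, and then use a two-surjection argument to build the natural isomorphism $H_d \cong H_k$.

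First I would set up the intersection analysis. In $T_d$ the dihedral subgroup $\langle b,c\rangle$ has order $2d$, so $\langle bc\rangle$ is cyclic of order $d$, and every subgroup of it is of the form $\langle (bc)^m\rangle$ for $m \mid d$. Consequently $N \cap \langle bc\rangle$ (with $N \isdef \llangle (bac)^d\rrangle$) is cyclic of order $d/n_d$, where $n_d$ denotes the order of the image of $bc$ in $H_d = T_d/N$. The equation $N \cap \langle bc\rangle = \langle (bc)^k\rangle$ is thus equivalent to $\gcd(k,d) = n_d$, and when $k$ is taken to be the canonical divisor of $d$ in $\{1,\dots,d\}$ satisfying this, one simply has $k = n_d$.

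For the ($\Rightarrow$) direction I would construct two natural surjections, each sending generators to generators. The map $H_k \twoheadrightarrow H_d$ exists because every defining relation of $H_k$ holds in $H_d$: the shared triangle relations are automatic, $(bc)^k = 1$ holds since $n_d \mid k$, and $(bac)^k = 1$ follows by applying the geodesic automorphism $\#$ together with Remark \ref{Rem_RelatorsInGeodesicTriangleGroup}. The reverse map $H_d \twoheadrightarrow H_k$ exists because $k \mid d$, making $(bc)^d = ((bc)^k)^{d/k} = 1$ and $(bac)^d = 1$ in $H_k$. Each composition fixes $a,b,c$ setwise and is therefore the identity, so the two surjections are mutually inverse isomorphisms, witnessing $H_d = H_k$.

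The ($\Leftarrow$) direction runs the same construction in reverse: if $H_d = H_k$, then the relation $(bc)^k = 1$ valid in $H_k$ already places $(bc)^k$ into $N$, so $\langle (bc)^k\rangle \subseteq N \cap \langle bc\rangle$; and the natural identification forces the order of $bc$ to agree in both groups, which combined with $n_d \mid d$ pins down $\gcd(k,d) = n_d$ and hence subgroup equality.

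The main obstacle I anticipate is the mild bookkeeping around cyclic generators: the equation $N \cap \langle bc\rangle = \langle (bc)^k\rangle$ determines $k$ only up to the relation $\gcd(k,d) = n_d$, so one has to match this ambiguity against the interpretation of $H_d = H_k$ consistently. Fixing $k$ as the canonical divisor $n_d$ of $d$ makes the two-surjection argument entirely routine, and the one genuinely nontrivial ingredient is using the automorphism $\#$ to automatically transport the relation $(bc)^k$ to its geodesic partner $(bac)^k$.
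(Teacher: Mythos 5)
Your forward direction is essentially the paper's argument: both hinge on the observation that the hypothesis places $(bc)^k$ in $\llangle (bac)^d\rrangle$, so $(bc)^k=1$ in $H_d$, and then on Remark \ref{Rem_RelatorsInGeodesicTriangleGroup} (the automorphism $\#$, which sends $bc$ to $bac$) to obtain $(bac)^k=1$ as well; the paper packages the conclusion as a rewriting of presentations, you package it as a pair of mutually inverse canonical surjections $H_k\twoheadrightarrow H_d$ and $H_d\twoheadrightarrow H_k$, which is the same content. Your explicit handling of the normalisation (the intersection is $\langle (bc)^{n_d}\rangle$, so the hypothesis only fixes $\gcd(k,d)=n_d$ and one should take $k\mid d$) is, if anything, more careful than the paper's ``clearly $k$ divides $d$''.

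The gap is in your backward direction. From $H_d=H_k$ you correctly get $\langle (bc)^k\rangle\subseteq\llangle(bac)^d\rrangle\cap\langle bc\rangle$ -- which is in fact all the paper itself proves for that implication -- but your closing step, ``the orders of $bc$ agree, which pins down $\gcd(k,d)=n_d$'', does not follow. Agreement of orders gives $n_d=n_k$, and $n_k\mid k$ together with $n_d\mid d$ yields only $n_d\mid\gcd(k,d)$; for the reverse divisibility you would need to know that the order of $bc$ in $H_k$ is exactly $k$, which is nowhere established (and is essentially the uncollapsedness you are not entitled to assume here). Indeed the claimed equality can genuinely fail within the stated range $1\leq k\leq d$: for $d=k=4$ the right-hand side $H_4=H_4$ holds trivially, yet $(cb)^2\in\llangle(bac)^4\rrangle$ by the computation in Corollary \ref{Cor_NoGeodesicSelfDualFor347}, so the intersection is $\langle(bc)^2\rangle\neq\langle(bc)^4\rangle=\{1\}$. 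So no argument can close this step without an extra hypothesis (e.g. taking $k$ to be exactly the order $n_d$, or restricting to uncollapsed $H_d$). Since only the forward implication is used later (uncollapsedness forces the intersection with $\langle bc\rangle$ to be trivial), this does not affect the lemma's role in the paper, but as a proof of the stated equivalence your backward half is incomplete -- as, in fairness, is the paper's one-sentence treatment of that direction.
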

\begin{proof}
    Suppose first that 
    $\llangle (bac)^d\rrangle \cap \langle bc\rangle= \langle (bc)^k\rangle$.
    Clearly, $k$ divides $d$. Then:
    \begin{align*}
        H_d &= T_d / \llangle (bac)^d\rrangle \\
        &= T_d / \llangle (bc)^k, (bac)^d\rrangle \\
        &= \langle a,b,c \mid a^2, b^2, c^2, (ab)^3, (ac)^2, (bc)^d, (bc)^k, (bac)^d \rangle \\
        &= \langle a,b,c \mid a^2, b^2, c^2, (ab)^3, (ac)^2, (bc)^k, (bac)^d\rangle.
    \end{align*}
    Since $(bc)^k = 1$, Remark \ref{Rem_RelatorsInGeodesicTriangleGroup}
    implies $(bac)^k=1$ as well.

    For the other direction, we note that $T_d \neq T_k$ since
    the factor groups with respect to $\llangle a\rrangle$ are dihedral groups
    of different orders.
    Therefore, $H_d = H_k$ implies that $(bc)^k \in \llangle (bac)^d\rrangle$
    and $(bac)^k \in \llangle (bac)^d\rrangle$.
\end{proof}

This motivates the following definition.
\begin{definition}\label{Def_Uncollapsed}
    $H_d$ is \textbf{uncollapsed} if $H_d \neq H_k$ for all $1 \leq k \leq d$.
\end{definition}
We want to show that $H_d$ is uncollapsed if $d \geq 5$ and $d \neq 7$.

\begin{lemma}\label{Lem_UncollapsedReducedToPrimes}
    $H_d$ is uncollapsed if and only if $H_{\frac{d}{p}} \neq H_d$ for all
    primes $p$ dividing $d$.
\end{lemma}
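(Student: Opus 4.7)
The plan is to observe that, thanks to Lemma \ref{Lem_UncollapsedConditionForSelfDuality} and Remark \ref{Rem_RelatorsInGeodesicTriangleGroup}, the question of which $H_k$ coincide with $H_d$ is controlled entirely by the order $m$ of $bc$ in $H_d$. The forward direction of the biconditional is immediate: if $H_d$ is uncollapsed, then $H_d \neq H_k$ for every $1 \leq k < d$, which in particular covers $k = d/p$ for every prime $p$ dividing $d$.

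For the converse I would argue by contrapositive. Suppose $H_d$ is collapsed, so there exists some $1 \leq k < d$ with $H_d = H_k$. By Lemma \ref{Lem_UncollapsedConditionForSelfDuality} this is equivalent to $(bc)^k = 1$ holding in $H_d$. Letting $m$ denote the order of $bc$ in $H_d$, we then have $m \mid k$ and $m \mid d$, so $m \leq k < d$, i.e.\ $m$ is a \emph{proper} divisor of $d$.

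Now I pick any prime $p$ dividing the nontrivial integer $d/m$. Then $mp \mid d$, or equivalently $m \mid d/p$, so $(bc)^{d/p} = ((bc)^m)^{(d/p)/m} = 1$ in $H_d$. Applying the geodesic automorphism $\#$, which sends $bc$ to $bac$ and is bijective on $H_d$ by Remark \ref{Rem_RelatorsInGeodesicTriangleGroup}, yields $(bac)^{d/p} = 1$ as well. Hence $H_d$ already satisfies all defining relations of $H_{d/p}$, and this promotes the canonical surjection $H_d \twoheadrightarrow H_{d/p}$ (available because $d/p \mid d$) to an isomorphism. Thus $H_d = H_{d/p}$ for this prime $p$, contradicting the hypothesis.

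I do not anticipate a genuine obstacle: the whole argument is essentially a divisibility statement about the order of $bc$ in $H_d$, combined with the relator symmetry between $(bc)^{\ast}$ and $(bac)^{\ast}$ supplied by $\#$. The only subtlety worth stating explicitly in the write-up is that the surjection $H_d \twoheadrightarrow H_{d/p}$ is an isomorphism precisely when the two relators $(bc)^{d/p}$ and $(bac)^{d/p}$ are trivial in $H_d$, which is why Remark \ref{Rem_RelatorsInGeodesicTriangleGroup} is the crucial input.
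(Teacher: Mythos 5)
Your proof is correct and is essentially the paper's argument: both reduce a collapse $H_d = H_k$ to a collapse at prime index by observing that the relators $(bc)^{d/p}$ and $(bac)^{d/p}$ are already trivial in $H_d$, so the canonical quotient $H_d \to H_d/\llangle (bc)^{d/p},(bac)^{d/p}\rrangle = H_{d/p}$ is an isomorphism. The only cosmetic difference is that you route the divisibility through the order of $bc$ in $H_d$ (using $\#$ to transfer it to the $(bac)$--relator), whereas the paper picks a prime $p$ dividing $d/k$ and notes both added relators already hold in $H_k$; your variant has the small advantage of not implicitly assuming $k \mid d$.
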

\begin{proof}
    If $H_k = H_d$ with $\frac{d}{k}$ not prime, there is a prime $p$ dividing
    this fraction, such that
    \begin{equation*}
        H_{\frac{d}{p}} = H_d/\llangle (bc)^{\frac{d}{p}}, (bac)^{\frac{d}{p}}\rrangle =
            H_k/\llangle (bc)^{\frac{d}{p}}, (bac)^{\frac{d}{p}}\rrangle = H_k = H_d.\qedhere
    \end{equation*}
\end{proof}

\begin{corollary}\label{Cor_UncollapsedByFiniteFactor}
    Let $p \not\in \{3,7\}$ be a prime. Then $H_p$ is uncollapsed.
    Furthermore, $H_d$ is uncollapsed for $d \in \{2,6,12,15,21,35,49\}$.
\end{corollary}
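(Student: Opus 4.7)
The plan is to invoke Lemma \ref{Lem_UncollapsedReducedToPrimes}, which reduces the uncollapsedness of $H_d$ to verifying $H_{d/p} \neq H_d$ for every prime $p$ dividing $d$. Each such inequality can then be certified by comparing the orders tabulated in Remark \ref{Rem_SelfDualUniversalCoverGroupSizes}.

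For the claim about primes, let $p$ be a prime with $p \not\in \{3,7\}$. The only prime dividing $p$ is $p$ itself, so Lemma \ref{Lem_UncollapsedReducedToPrimes} demands only that $H_1 \neq H_p$. Since $|H_1|=1$, it suffices to observe that $|H_p|>1$ in every relevant case: the table gives $|H_2|=4$, $|H_5|=60$, and $|H_p|=\infty$ for all primes $p \geq 11$. Hence $H_1$ and $H_p$ differ in order, and $H_p$ is uncollapsed.

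For each of the seven explicit values of $d$, I would walk through the prime divisors and look up both sides in the table. In every case $d/p \leq 9$, so $|H_{d/p}|$ is listed in Remark \ref{Rem_SelfDualUniversalCoverGroupSizes}, while $|H_d|$ is either infinite (for $d \in \{12,15,21,35,49\}$) or also listed (for $d \in \{2,6\}$). A direct comparison yields $|H_{d/p}| \neq |H_d|$ in all cases; for example, $d=6$ needs $|H_2|=4 \neq 108 = |H_6|$ and $|H_3|=1 \neq 108 = |H_6|$, while $d=49$ only needs $|H_7|=1 \neq \infty = |H_{49}|$.

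No step poses a genuine obstacle: the only non-trivial ingredient---the computation of the orders $|H_d|$---has already been absorbed into Remark \ref{Rem_SelfDualUniversalCoverGroupSizes}, so the proof reduces to tabular bookkeeping.
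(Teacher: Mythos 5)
Your proposal is correct and follows the paper's own route exactly: reduce via Lemma \ref{Lem_UncollapsedReducedToPrimes} to comparing $H_{d/p}$ with $H_d$ for each prime divisor $p$, then distinguish the groups by the orders (finite versus finite, or finite versus infinite) listed in Remark \ref{Rem_SelfDualUniversalCoverGroupSizes}. The only packaging difference is that you spell out the tabular comparisons case by case, which the paper leaves implicit.
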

\begin{proof}
    Apply Lemma \ref{Lem_UncollapsedReducedToPrimes}. We have
    $\{1\} = H_1 = H_p$ if and only if $p \in \{3,7\}$.

    If $H_d$ is finite, we can inspect the table from Remark
    \ref{Rem_SelfDualUniversalCoverGroupSizes} to see whether it
    is uncollapsed. If $H_d$ is infinite, but for every prime $p$ dividing
    $d$, the group $H_{\frac{d}{p}}$ is finite, then $H_d$ has to be
    uncollapsed.
\end{proof}

\begin{lemma}\label{Lem_DoublePrimeIsUncollapsed}
    Let $p$ be an odd prime. Then $H_{2p}$ is uncollapsed.
\end{lemma}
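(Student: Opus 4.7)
The plan is to invoke Lemma \ref{Lem_UncollapsedReducedToPrimes}: since the only primes dividing $2p$ are $2$ and $p$, uncollapsedness of $H_{2p}$ reduces to the two inequalities $H_{2p}\neq H_p$ and $H_{2p}\neq H_2$.

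The inequality $H_{2p}\neq H_2$ is immediate from Remark \ref{Rem_SelfDualUniversalCoverGroupSizes}. For $p=3$ we have $|H_6|=108\neq 4=|H_2|$, and for every odd prime $p\geq 5$ the group $H_{2p}$ is infinite while $|H_2|=4$.

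For $H_{2p}\neq H_p$, by Lemma \ref{Lem_UncollapsedConditionForSelfDuality} together with Remark \ref{Rem_RelatorsInGeodesicTriangleGroup} it suffices to show $(bc)^p\neq 1$ in $H_{2p}$. I introduce an auxiliary homomorphism $\phi: H_{2p}\to H_2$ determined on generators by $a\mapsto a$, $b\mapsto a$, $c\mapsto c$. Well-definedness is a routine check on defining relators: the involutive relations and $(ac)^2=1$ are preserved, $(ab)^3$ maps to $(a^2)^3=1$, $(bc)^{2p}$ maps to $((ac)^2)^p=1$, and $(bac)^{2p}$ maps to $c^{2p}=1$. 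Since $H_2\cong C_2\times C_2$, the element $ac$ has order exactly $2$; hence for odd $p$ one obtains $\phi((bc)^p)=(ac)^p=ac\neq 1$ in $H_2$. This forces $(bc)^p\neq 1$ in $H_{2p}$, as required.

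The main obstacle is selecting the correct target group for the auxiliary homomorphism: the quotient must satisfy $(bc)^{2p}=(bac)^{2p}=1$ while still detecting $(bc)^p$. Collapsing $b$ onto $a$ sends $bc$ to $ac$, an element of order $2$ in the Klein four--group $H_2$, and the odd parity of $p$ then guarantees that $(bc)^p$ does not vanish in the image. Together with the order comparison above, this finishes the proof via Lemma \ref{Lem_UncollapsedReducedToPrimes}.
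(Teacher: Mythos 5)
Your proof is correct and essentially mirrors the paper's: both reduce via Lemma \ref{Lem_UncollapsedReducedToPrimes} to the comparisons with $H_2$ (settled by the orders in Remark \ref{Rem_SelfDualUniversalCoverGroupSizes}) and with $H_p$, the latter being ruled out by a homomorphism onto a small $2$--group that is well--defined only in even degree and is nontrivial on the relevant relator at exponent $p$. The paper uses the sign map $a,b,c\mapsto -1$ to detect $(bac)^p\neq 1$, whereas you collapse $b$ onto $a$ and map onto $H_2\cong C_2\times C_2$ to detect $(bc)^p\neq 1$ --- an equivalent variant; just make explicit that $ac\neq 1$ in $H_2$ (if $a=c$ held there, $H_2$ would be a quotient of $\langle a,b\mid a^2,b^2,(ab)^3\rangle\cong S_3$, contradicting $|H_2|=4$).
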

\begin{proof}
    By Lemma \ref{Lem_UncollapsedReducedToPrimes}, we only need
    to consider $H_2$ and $H_p$. By Remark 
    \ref{Rem_SelfDualUniversalCoverGroupSizes}, $|H_{2p}| \neq 2$.

    Consider the map $H_d \to \{\pm 1\}$ that maps $a$, $b$, and $c$ all
    to $-1$. It is only well--defined for even $d$, thus $H_{2p} \neq H_p$.
\end{proof}

\begin{theorem}\label{Theo_PrimePowersUncollapsedWithGAP}
    $H_{2^n}$ is uncollapsed ($n\geq 3$).
    $H_{3^n}$ is uncollapsed ($n\geq 2$).
    $H_{5^n}$ is uncollapsed ($n\geq 1$).
\end{theorem}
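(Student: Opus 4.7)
The plan is to apply Lemma~\ref{Lem_UncollapsedReducedToPrimes}: for $d=p^n$ the prime $p$ is the only prime divisor of $d$, so uncollapsedness of $H_{p^n}$ reduces to the single inequality $H_{p^{n-1}}\neq H_{p^n}$. Since $H_{p^{n-1}}=H_{p^n}/\llangle (bc)^{p^{n-1}},\,(bac)^{p^{n-1}}\rrangle$, and since by Remark~\ref{Rem_RelatorsInGeodesicTriangleGroup} either of these relators forces the other, this inequality is equivalent to showing that $bc$ has order exactly $p^n$ in $H_{p^n}$, rather than some proper divisor.

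I would then split the work into three regimes. In the base cases ($n=3$ for $p=2$, $n=2$ for $p=3$, $n=1$ for $p=5$), both $H_{p^{n-1}}$ and $H_{p^n}$ are finite with the distinct orders listed in Remark~\ref{Rem_SelfDualUniversalCoverGroupSizes}, so they cannot coincide. In the transition cases ($n=4$ for $p=2$, $n=3$ for $p=3$, $n=2$ for $p=5$), $H_{p^{n-1}}$ is still finite while $H_{p^n}$ is infinite; the non-coincidence then reduces to a GAP verification that $H_{16}$, $H_{27}$, and $H_{25}$ are indeed infinite (for instance by exhibiting subgroups of arbitrarily large index, or by mapping onto a known infinite group). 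This computational input is what accounts for the ``WithGAP'' in the theorem's name.

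The main obstacle is the third regime, where both $H_{p^{n-1}}$ and $H_{p^n}$ are infinite --- that is, $n\geq 5$ for $p=2$, $n\geq 4$ for $p=3$, and $n\geq 3$ for $p=5$. Order counting is no longer available, and one must genuinely show that the order of $bc$ strictly grows with $n$. My approach would be to exhibit, for each such $n$, a finite quotient $Q_n$ of $H_{p^n}$ in which the image of $bc$ has order exactly $p^n$; such a $Q_n$ immediately certifies $(bc)^{p^{n-1}}\neq 1$ in $H_{p^n}$. Natural candidates for $Q_n$ are automorphism groups of regular $\{p^n,3\}_{p^n}$-maps, or images of $H_{p^n}$ in $\mathrm{PSL}_2$ over a suitable finite ring, ideally arranged in a coherent tower $Q_{n_0}\twoheadleftarrow Q_{n_0+1}\twoheadleftarrow Q_{n_0+2}\twoheadleftarrow\cdots$ compatible with the natural surjections $H_{p^{n+1}}\twoheadrightarrow H_{p^n}$, so that a base-case construction verified in GAP propagates up the tower and settles all $n$ at once.
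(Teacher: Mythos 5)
Your reduction via Lemma~\ref{Lem_UncollapsedReducedToPrimes} and Remark~\ref{Rem_RelatorsInGeodesicTriangleGroup} to the single claim that $bc$ has order exactly $p^n$ in $H_{p^n}$ is correct, and your first two regimes are indeed immediate from Remark~\ref{Rem_SelfDualUniversalCoverGroupSizes} (note that the infiniteness of $H_d$ for $d\geq 10$ is already recorded there via the cited Edjvet--Juh\'asz result, so no separate GAP verification is needed for $H_{16}$, $H_{27}$, $H_{25}$). The genuine gap is your third regime, which is the entire content of the theorem: you do not construct the certifying finite quotients $Q_n$, you only name ``natural candidates''. Such a $Q_n$ must be a finite group generated by three involutions with $(ab)^3=(ac)^2=1$ in which \emph{both} $bc$ and $bac$ have order dividing $p^n$ while $bc$ has order exactly $p^n$; the Petrie-type relation $(bac)^{p^n}=1$ is precisely what makes the groups $\{d,3\}_d$ delicate, and neither the $\mathrm{PSL}_2$ suggestion nor the regular-map suggestion is shown to satisfy it. The ``coherent tower'' idea is also unsubstantiated: passing from $Q_n$ to $Q_{n+1}$ means unwinding the order of $bc$ from $p^n$ to $p^{n+1}$ while preserving the $(bac)$-relation, i.e.\ exactly a covering construction of the kind the paper builds with voltage assignments (Proposition~\ref{Prop_ConstructLiftSurface}); but that machinery demonstrably requires $p\geq 5$ (Lemma~\ref{Lem_DifferentFacesInUmbrellaForPrimePower} breaks for $p\in\{2,3\}$ because $\langle a,b\rangle\cong D_6$ contains elements of order $2$ and $3$), and $2^n$, $3^n$ are the main cases this theorem must cover. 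So your plan stalls at exactly the obstruction that forced a different method here.

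For comparison, the paper's proof of this statement is not a quotient-certificate argument at all: it is a GAP computation (detailed in the author's thesis) that uses the subgroup presentation (Reidemeister--Schreier type) algorithm to obtain presentations of the finite-index subgroups $\llangle (bac)^{8}\rrangle\leq H_{2^n}$, $\llangle (bac)^{9}\rrangle\leq H_{3^n}$, and $\llangle (bac)^{5}\rrangle\leq H_{5^n}$, with $n$ entering only through exponents, and then distinguishes the groups for different $n$ by the abelian invariants of these subgroups. If you want to salvage your approach, you would need an explicit family of finite quotients with the stated orders of $bc$ and $bac$ for $p=2,3$ and all $n$, together with a proof of the relation $(bac)^{p^n}=1$ in each; as it stands, the key step is a hope rather than an argument.
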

\begin{proof}
    This can be shown by a lengthy calculation in GAP (\cite{GAP4}),
    that is covered in \cite[Theorem 9.5.7]{Baumeister_Phd}. The code
    can also be found at 
    \url{https://markusbaumeister.github.io/code/UncollapsedGeodesicTriangleGroups.g}.
    The proof computes
    a presentation of the subgroups 
    $\llangle (bac)^8\rrangle \leq H_{2^n}$,
    $\llangle (bac)^9\rrangle \leq H_{3^n}$, 
    and $\llangle (bac)^5\rrangle \leq H_{5^n}$, by using
    the subgroup presentation algorithm in 
    \cite{HoltEickOBrien_HandbookComputationalGroupTheory}. Then, we
    calculate the abelian invariants of this subgroups to
    distinguish the groups.
\end{proof}

\subsection{Voltage assignments}\label{Subsect_VoltageAssignments}
In this subsection, we show that $H_{p^n}$ and $H_{4p}$ are uncollapsed.
To achieve this, we use
\textit{corner voltage assigments} to construct appropriate surface coverings.
The presentation of this theory follows \cite{ArchConder_TrinitySymmetry}.
For more context about voltage assignments, compare
\cite{ArchRich_BranchedCoveringsOfMaps}.

\begin{definition}
    Let $(\mathcal{F},\V{\alpha},\E{\beta},\F{\gamma})$ be a surface
    and $B$ be a group. A map $v: \mathcal{F} \to B$
    is called \textbf{corner voltage assignment}, if 
    $v(\E{\beta}.x) = v(x)^{-1}$ holds for all $x \in \mathcal{F}$. In this
    scenario, $B$ is called the \textbf{voltage group}.
\end{definition}

\begin{definition}
    Let $(\mathcal{F},\V{\alpha},\E{\beta},\F{\gamma})$ be a surface with
    corner voltage assignment $v: \mathcal{F} \to B$. The
    \textbf{lift} of $(\mathcal{F},\V{\alpha},\E{\beta},\F{\gamma})$ 
    with respect to
    $v$ is the quadruple 
    $(\mathcal{F}\times B, \V{\hat{\alpha}},\E{\hat{\beta}},\F{\hat{\gamma}})$,
    with
    \begin{align*}
        \V{\hat{\alpha}}.(x,g) &\isdef (\V{\alpha}.x,g), &
        \E{\hat{\beta}}.(x,g) &\isdef (\E{\beta}.x,v(x)g), &
        \F{\hat{\gamma}}.(x,g) &\isdef (\F{\gamma}.x,g).
    \end{align*}
\end{definition}
In general, the lift does not define a surface in the sense of
Definition \ref{Def_Surface}. There are two possible
reasons: The orbits of $\langle\V{\hat{\alpha}},\E{\hat{\beta}}\rangle$
might have an order larger than 3, and the group 
$\langle\V{\hat{\alpha}},\E{\hat{\beta}},\F{\hat{\gamma}}\rangle$
might not act transitively on $\mathcal{F}\times B$. The transitivity
can be achieved by restriction to a single orbit.

The orbit lengths of 
$\langle\V{\hat{\alpha}},\E{\hat{\beta}}\rangle$ can be controlled
by an additional condition.

\begin{remark}\label{Rem_LiftIsTriangulated}
    Let $(\mathcal{F},\V{\alpha},\E{\beta},\F{\gamma})$ be a surface with
    corner voltage assignment $v: \mathcal{F} \to B$ and lift
    $(\mathcal{F}\times B, \V{\hat{\alpha}},\E{\hat{\beta}},\F{\hat{\gamma}})$.
    Then,
    \begin{enumerate}
        \item $\V{\hat{\alpha}}$, $\E{\hat\beta}$, and $\F{\hat\gamma}$ are
            involutions without fixed points on $\mathcal{F}\times B$.
        \item $\langle\V{\hat\alpha},\E{\hat\beta},\F{\hat\gamma}\rangle$
            acts transitively on each of its orbits.
        \item $\V{\hat\alpha}\E{\hat\beta}$ consists only of 3--cycles if
            and only if $v(\E{\beta}\V{\alpha}.x)v(\V{\alpha}\E{\beta}.x)v(x) = 1$
            for all $x \in \mathcal{F}$.
        \item $\V{\hat\alpha}\F{\hat\gamma}$ consists only of 2--cycles.
    \end{enumerate}
\end{remark}
\begin{proof}
    Most properties follow from the corresponding properties for surfaces
    (compare Definition \ref{Def_Surface}). We compute
    $(\V{\hat\alpha}\E{\hat\beta})^3.(x,g)$ for $(x,g) \in \mathcal{F}\times B$:
    \begin{equation*}
        (\V{\hat\alpha}\E{\hat\beta})^2.(\V{\alpha}\E{\beta}x,v(x)g) = 
        (\V{\hat\alpha}\E{\hat\beta}).(\E{\beta}\V{\alpha}x,v(\V{\alpha}\E{\beta}.x)v(x)g) = 
        (x, v(\E{\beta}\V{\alpha}.x)v(\V{\alpha}\E{\beta}.x)v(x)g)
    \end{equation*}
    Therefore, the product condition is equivalent to 
    $(\V{\hat\alpha}\E{\hat\beta})^3=1$. If there is an element
    $(x,g)$ that does not lie in a 3--cycle of $\V{\hat\alpha}\E{\hat\beta}$,
    it has to be fixed by it. But then, $x$ would have to be
    fixed by $\V{\alpha}\E{\beta}$, contradicting that we started with a surface.
\end{proof}

Before we can construct appropriate lifts, we need to prove a few technical lemmas.
\begin{lemma}\label{Lem_DifferentFacesInUmbrellaForCoprimes}
    Let $H_d$ be infinite. Then, there is no $k$ with $\gcd(k,d) = 1$ such that
    $(bc)^k \in \langle a,b\rangle$ or $(bac)^k \in \langle a,b\rangle$.
\end{lemma}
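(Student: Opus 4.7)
The plan is to argue by contradiction: if such a $k$ existed, one could show that the element $c$ itself already lies in $\langle a,b\rangle$, whence $H_d = \langle a,b,c\rangle = \langle a,b\rangle$ would be finite of order at most $6$, contradicting the hypothesis that $H_d$ is infinite.

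The key observation enabling this is that $\langle a,b\rangle$ is a quotient of the dihedral presentation $\langle a,b\mid a^2,b^2,(ab)^3\rangle\cong D_6$ (this is already used implicitly in Lemma \ref{Lem_SurfaceSubgroupRegularityCharacterisation}), so $\langle a,b\rangle\leq H_d$ has order at most $6$ in every quotient of $T_d$. Hence any conclusion of the form ``$c\in\langle a,b\rangle$'' is catastrophic for an infinite $H_d$.

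For the first case, suppose $(bc)^k\in\langle a,b\rangle$ with $\gcd(k,d)=1$. The defining relation $(bc)^d=1$ in $H_d$ forces the order $e$ of $bc$ to divide $d$; coprimality of $k$ with $d$ therefore gives $\gcd(k,e)=1$. In the cyclic group $\langle bc\rangle$ of order $e$, the element $(bc)^k$ is then a generator, so $\langle bc\rangle=\langle(bc)^k\rangle\subseteq\langle a,b\rangle$. In particular $bc\in\langle a,b\rangle$, and since $b^2=1$ we conclude $c=b(bc)\in\langle a,b\rangle$, yielding $H_d=\langle a,b\rangle$ and the desired contradiction. For the second case, suppose $(bac)^k\in\langle a,b\rangle$; the relation $(bac)^d=1$ plays exactly the same role as before, the order of $bac$ divides $d$, and coprimality gives $bac\in\langle a,b\rangle$. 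Using $a^2=b^2=1$, one checks $(ab)(bac)=c$, so again $c\in\langle a,b\rangle$ and the same contradiction arises.

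There is no real obstacle here; the only point worth flagging is the passage from $\gcd(k,d)=1$ to $\gcd(k,e)=1$ for the actual order $e$ of the relevant element, which is immediate from $e\mid d$. The argument relies crucially on the fact that the two ``long'' relators of $H_d$, namely $(bc)^d$ and $(bac)^d$, share the same exponent $d$, so that a single coprimality hypothesis treats both generators $bc$ and $bac$ symmetrically.
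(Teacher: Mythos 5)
Your proof is correct, and it diverges from the paper's in a useful way at the final step. The first half coincides with the paper's argument: from $(bc)^d=1$ (resp.\ $(bac)^d=1$) and $\gcd(k,d)=1$ one concludes, via Bézout/coprimality in the finite cyclic group $\langle bc\rangle$, that $bc\in\langle a,b\rangle$ (resp.\ $bac\in\langle a,b\rangle$); the paper phrases this as an application of the Euclidean algorithm and reduces the $bac$ case to the $bc$ case ``without loss of generality'', implicitly via the geodesic automorphism $\#$, which fixes $\langle a,b\rangle$ and sends $bc$ to $bac$, whereas you simply treat both cases directly --- either route is fine. The divergence is in how finiteness of $H_d$ is then obtained: the paper delegates this to ``a short computation in GAP'', presumably checking the finitely many possibilities $bc=w$ with $w\in\langle a,b\rangle$, while you observe that $bc\in\langle a,b\rangle$ already gives $c=b(bc)\in\langle a,b\rangle$ (and $(ab)(bac)=c$ in the other case), hence $H_d=\langle a,b,c\rangle=\langle a,b\rangle$, which is a homomorphic image of $\langle x,y\mid x^2,y^2,(xy)^3\rangle\cong D_6$ and so has order at most $6$. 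This makes the contradiction with the infiniteness of $H_d$ entirely computation-free and self-contained, which is arguably preferable to the paper's reliance on GAP for this step; the only hypotheses you use are the relators $(bc)^d$, $(bac)^d$, $a^2$, $b^2$, $(ab)^3$, all of which hold in $H_d$, so there is no gap.
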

\begin{proof}
    Without loss of generality, only consider $bc$. Since $(bc)^d = 1$, we
    can apply the \texttt{Euclidean} algorithm to deduce 
    $bc \in \langle a,b\rangle$. A short computation in GAP
    (\cite{GAP4}) shows that $H_d$ has to be finite in this case.
\end{proof}
\begin{lemma}\label{Lem_DifferentFacesInUmbrellaForPrimePower}
    Let $p\geq 5$ be a prime and $H_{p^n}$ be infinite and uncollapsed.
    If either $(bc)^k \in \langle a,b\rangle$ or $(bac)^k \in \langle a,b\rangle$
    holds, then $k$ is a multiple of $p^n$.
\end{lemma}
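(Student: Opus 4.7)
The plan is to derive $p^n \mid k$ from two structural facts about $H_{p^n}$: the subgroup $\langle a,b\rangle$ has order coprime to $p$, and the element $bc$ has order exactly $p^n$. Since the geodesic automorphism $\#$ fixes $a$ and $b$ (so it stabilises $\langle a,b\rangle$ setwise) and sends $bc$ to $bac$, the two alternatives in the hypothesis are swapped by $\#$; hence I restrict to the case $(bc)^k \in \langle a,b\rangle$.

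For the first fact, $\langle a,b\rangle \leq H_{p^n}$ is the image of $\langle a,b\rangle \leq T_{p^n}$ under the canonical surjection, and the latter is known to be $D_6$ (as recorded in the proof of Lemma \ref{Lem_SurfaceSubgroupRegularityCharacterisation}). So $|\langle a,b\rangle|$ divides $6$ and in particular is coprime to $p$, since $p \geq 5$. The second fact is the heart of the proof. The order of $bc$ in $H_{p^n}$ divides $p^n$, so it equals $p^s$ for some $s \leq n$. If $s < n$, then $(bc)^{p^s} = 1$, and applying $\#$ gives $(bac)^{p^s} = 1$ as well. All defining relations of $H_{p^s}$ then hold in $H_{p^n}$, and the universal property of presentations yields a surjection $H_{p^s} \twoheadrightarrow H_{p^n}$ mapping generators to generators. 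Composed with the canonical surjection $H_{p^n} \twoheadrightarrow H_{p^s}$ (which exists because $p^s \mid p^n$), one obtains an endomorphism of $H_{p^s}$ that is the identity on the generating set $\{a,b,c\}$, hence the identity. Both surjections are therefore isomorphisms, forcing $H_{p^n} = H_{p^s}$ with $s < n$, which contradicts the uncollapsed hypothesis.

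Combining the two facts: the order of $(bc)^k$ divides both $6$ (because $(bc)^k \in \langle a,b\rangle$) and $p^n$ (because $bc$ has order $p^n$). Since $\gcd(6, p^n) = 1$, this order is $1$, so $(bc)^k = 1$, which forces $p^n \mid k$ as claimed. The principal obstacle is the order computation for $bc$: the uncollapsedness hypothesis must be leveraged, and the essential trick is to use $\#$ to propagate $(bc)^{p^s} = 1$ to $(bac)^{p^s} = 1$, thereby recovering a full set of $H_{p^s}$-relations in $H_{p^n}$.
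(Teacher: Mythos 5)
Your proof is correct, and it takes a noticeably different route from the paper's. The paper first invokes Lemma~\ref{Lem_DifferentFacesInUmbrellaForCoprimes} (which is where the infiniteness hypothesis and a GAP computation enter) to exclude $\gcd(k,p^n)=1$, reduces via a gcd argument to $k=p^m$ with $0<m<n$, and then observes that $x=(bc)^{p^m}\in\langle a,b\rangle$ satisfies $x^{p^{n-m}}=1$, so $x$ cannot have order $2$ or $3$ and must be trivial, after which the collapse $H_{p^n}=H_{p^m}$ follows from Remark~\ref{Rem_RelatorsInGeodesicTriangleGroup} and contradicts uncollapsedness. You instead pin down the order of $bc$ as exactly $p^n$ (your surjection-composition argument is a correct, slightly more verbose version of the paper's appeal to Remark~\ref{Rem_RelatorsInGeodesicTriangleGroup} together with the presentation bookkeeping), and then finish by pure order coprimality: any element of $\langle a,b\rangle$ has order dividing $6$, while $(bc)^k$ has $p$-power order, so $(bc)^k=1$ and $p^n\mid k$. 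The gain is real: your argument never uses Lemma~\ref{Lem_DifferentFacesInUmbrellaForCoprimes}, hence needs no GAP computation, and the infiniteness hypothesis becomes superfluous. The trade-off is that your shortcut leans on $\gcd(6,p)=1$, i.e.\ on $p\geq 5$, so it does not transfer to the companion Lemma~\ref{Lem_DifferentFacesInUmbrellaForDoublePrime} for $H_{2p}$, which is precisely where the paper's shared machinery (the coprime lemma plus case analysis in the dihedral group) still earns its keep.
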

\begin{proof}
    By Lemma \ref{Lem_DifferentFacesInUmbrellaForCoprimes}, $k$
    cannot be coprime to $p^n$. If $k$ is not a multiple of $p^n$,
    we can reduce to the case $k = p^m$ with $0  < m < n$. In this
    case, $1 = (bc)^{p^n} = ((bc)^{p^m})^{p^{n-m}} = x^{p^{n-m}}$
    for some $x \in \langle a,b\rangle$.

    Since $p \geq 5$, the element $x$ cannot have order 2 or 3.
    The only remaining element in $\langle a,b\rangle$ is 1.
    But $x = 1$ would imply $H_{p^n} = H_{p^m}$ 
    (by Remark \ref{Rem_RelatorsInGeodesicTriangleGroup}),
    contradicting $H_{p^n}$ being uncollapsed.
\end{proof}
\begin{lemma}\label{Lem_DifferentFacesInUmbrellaForDoublePrime}
    Let $p$ be a prime and $H_{2p}$ be infinite and uncollapsed.
    If either $(bc)^k \in \langle a,b\rangle$ or $(bac)^k \in \langle a,b\rangle$
    holds, then $k$ is a multiple of $2p$.
\end{lemma}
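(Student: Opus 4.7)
The plan is to follow the patterns of Lemmas \ref{Lem_DifferentFacesInUmbrellaForCoprimes}, \ref{Lem_DifferentFacesInUmbrellaForPrimePower} and \ref{Lem_DoublePrimeIsUncollapsed}: reduce to small values of $k$ and derive contradictions either with $H_{2p}$ being infinite or with it being uncollapsed. Since the automorphism $\#$ fixes $\langle a,b\rangle$ setwise (as $\#(a)=a$, $\#(b)=b$) and sends $bc$ to $bac$, the hypothesis $(bac)^k\in\langle a,b\rangle$ is equivalent to $(bc)^k\in\langle a,b\rangle$, so I only treat the latter. By Lemma \ref{Lem_DifferentFacesInUmbrellaForCoprimes}, $\gcd(k,2p)\neq 1$; since $\langle(bc)^k\rangle=\langle(bc)^{\gcd(k,2p)}\rangle\subseteq\langle a,b\rangle$, we may assume $k$ divides $2p$, so $k\in\{2,p,2p\}$. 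By Remark \ref{Rem_SelfDualUniversalCoverGroupSizes}, $H_4$ and $H_6$ are finite, so the infiniteness of $H_{2p}$ forces $p\geq 5$.

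For $k=2$, the element $(bc)^2\in\langle a,b\rangle$ has order dividing both $p$ (since $((bc)^2)^p=1$) and $|\langle a,b\rangle|=6$; as $\gcd(p,6)=1$, it must be trivial. Then Remark \ref{Rem_RelatorsInGeodesicTriangleGroup} forces $H_{2p}=H_2$, contradicting uncollapsedness.

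For $k=p$, the element $(bc)^p$ has order dividing $\gcd(2,6)=2$, so it is either trivial or one of the three reflections $a$, $b$, $aba$ in $\langle a,b\rangle\cong D_6$. The trivial case yields $H_{2p}=H_p$, again contradicting uncollapsedness. The main obstacle is ruling out the three reflection cases, for which I would invoke the parity homomorphism $H_{2p}\to\{\pm 1\}$ sending $a$, $b$, and $c$ all to $-1$, which is exactly the map used in the proof of Lemma \ref{Lem_DoublePrimeIsUncollapsed} and is well defined precisely because $2p$ is even. Under this map, $(bc)^p\mapsto 1$, whereas each of $a$, $b$, $aba$ maps to $-1$, giving the required contradiction and completing the argument.
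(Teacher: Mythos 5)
Your proof is correct, and although it keeps the paper's overall skeleton (use Lemma \ref{Lem_DifferentFacesInUmbrellaForCoprimes} plus a B\'ezout/gcd reduction to the cases $k=2$ and $k=p$, then contradict infiniteness or uncollapsedness), it settles the two critical sub--cases by genuinely different means. For $k=2$ the paper falls back on a GAP computation showing that $(bc)^2\in\langle a,b\rangle$ forces $H_{2p}$ to be finite; your argument --- the order of $(bc)^2$ divides both $p$ and $|\langle a,b\rangle|$, which divides $6$, and infiniteness of $H_{2p}$ forces $p\geq 5$, so $(bc)^2=1$ and $H_{2p}=H_2$ --- is computer--free and shorter. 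For $k=p$ the paper eliminates $(bc)^p\in\{a,b,aba\}$ case by case through explicit word manipulations with $(ab)^3$ and the identity $(bc)^pb(bc)^p=b$, each time deriving finiteness; you instead apply the sign homomorphism $a,b,c\mapsto -1$ (well defined precisely because $2p$ is even, and the same map the paper uses to prove Lemma \ref{Lem_DoublePrimeIsUncollapsed}), which rules out all three reflections simultaneously since they map to $-1$ while $(bc)^p$ maps to $+1$. Your route buys uniformity and removes the computer dependence; the paper's route stays inside elementary relator calculations but at the cost of a GAP check and three separate cases. One small point: listing the involutions of $\langle a,b\rangle$ as $\{a,b,aba\}$ tacitly assumes $\langle a,b\rangle\cong D_6$, i.e. $ab\neq 1$ (the paper makes the same tacit assumption). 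This is harmless --- if $ab=1$ then $a=b$ and $H_{2p}$ is a quotient of $\langle a,c\mid a^2,c^2,(ac)^2\rangle$, hence finite --- and in fact your parity argument already covers the stray possibilities: if $(bc)^p$ were $ab$ or $ba$ of order at most $2$, then $(ab)^2=1$ together with $(ab)^3=1$ gives $ab=1$, so $(bc)^p=1$ again. Adding one sentence to that effect would make the case analysis airtight.
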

\begin{proof}
    By Lemma \ref{Lem_DifferentFacesInUmbrellaForCoprimes}, $k$
    cannot be coprime to $2p$. If $k$ is not a multiple of $2p$,
    we can reduce to the cases $k = 2$ or $k = p$. For $k=2$,
    it is easy to check with GAP (\cite{GAP4}) that 
    every case of $(bc)^2 \in \langle a,b\rangle$ implies the
    finiteness of $H_{2p}$.

    For $k = p$, we have $1 = (bc)^{2p} = ((bc)^p)^2$. Therefore,
    $(bc)^p = 1$ or has order 2. The first case is impossible
    since $H_{2p}$ is uncollapsed. The second one implies
    $(bc)^p \in \{a,b,aba\}$. To analyse these cases, we
    use the equality $(bc)^pb(bc)^p = b(cb)^p(bc)^p = b$:
    \begin{align*}
        (bc)^p &= a &\text{implies} && 1 = (ab)^3 &= ((bc)^pb)^3 = b^2(bc)^pb = ab, \\
        (bc)^p &= aba &\text{implies} && 1 = (ab)^3 &= (bc)^pb(bc)^pa = ba.
    \end{align*}
    In this case, $H_d$ is finite.
    From $(bc)^p = b$ we can deduce $(bc)^{p-1} = c$, from which
    $(bc)^{p-2} = b$ follows. Inductively, either $b=1$ or
    $c=1$ holds, then $H_{2p}$ is finite.
\end{proof}

\begin{proposition}\label{Prop_ConstructLiftSurface}
    Let $H_d$ be uncollapsed such that $(bc)^k \in \langle a,b\rangle$
    or $(bac)^k \in \langle a,b,\rangle$ is only possible if
    $k$ is a multiple of $d$. For any prime $p$, there exists
    a degree--$dp$--surface 
    $(\mathcal{G},\V{\hat\alpha},\E{\hat\beta},\F{\hat\gamma})$
    such that $\E{\hat\beta}\V{\hat\alpha}\F{\gamma}$ consists
    only of $dp$-cycles.
    In particular, $H_d \neq H_{dp}$.
\end{proposition}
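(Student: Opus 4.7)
The plan is to realise $\mathcal{G}$ as a connected component of a lift (Remark \ref{Rem_LiftIsTriangulated}) of the degree--$d$--surface $S_0 = (H_d, a, b, c)$, where $H_d$ acts on itself by left multiplication. The hypotheses force $\langle a,b\rangle$, $\langle a,c\rangle$ and $\langle b,c\rangle$ to have orders $6$, $4$, and $2d$ in $H_d$ (the critical one, $|\langle b,c\rangle|=2d$, follows from uncollapsedness via Remark \ref{Rem_RelatorsInGeodesicTriangleGroup}), and to act freely by left multiplication, so $S_0$ is a bona fide degree--$d$--surface.

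Next, choose voltage group $B = \Z/p\Z$ and construct a corner voltage assignment $v\colon H_d \to \Z/p\Z$ satisfying the edge and triangle conditions
\[
    v(bx) = -v(x), \qquad v(bax) + v(abx) + v(x) = 0 \quad \text{for all } x \in H_d.
\]
A direct iteration of the lift formulas shows
\[
    (\E{\hat\beta}\F{\hat\gamma})^d.(x,g) = (x, U(x) + g), \qquad (\E{\hat\beta}\V{\hat\alpha}\F{\hat\gamma})^d.(x,g) = (x, V(x) + g),
\]
where $U(x) = \sum_{i=0}^{d-1} v(c(bc)^i x)$ and $V(x) = \sum_{i=0}^{d-1} v(ac(bac)^i x)$. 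The extra conditions I need are therefore $U(x) \not\equiv 0$ and $V(x) \not\equiv 0 \pmod{p}$ for every $x \in H_d$; combined with the primality of $p$, these force every cycle of $\E{\hat\beta}\F{\hat\gamma}$ and every cycle of $\E{\hat\beta}\V{\hat\alpha}\F{\hat\gamma}$ in the lift to have length exactly $dp$.

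To build such a $v$, I exploit that the standing hypothesis $(bc)^k, (bac)^k \in \langle a,b\rangle \Rightarrow d \mid k$ implies that any face (a left coset of $\langle a,b\rangle$) meets each umbrella and each geodesic in at most a single corner; hence the three corners of any face sit on three pairwise distinct umbrellas and three pairwise distinct geodesics. This decoupling means the triangle relation on a face couples three independent umbrella contributions and three independent geodesic contributions, so voltages can be prescribed geodesic--by--geodesic and then rebalanced within each face without creating cyclic obstructions. Forming the full lift $\hat S_0 = (H_d \times \Z/p\Z, \V{\hat\alpha}, \E{\hat\beta}, \F{\hat\gamma})$ and restricting to a single $\langle \V{\hat\alpha}, \E{\hat\beta}, \F{\hat\gamma}\rangle$--orbit then yields the required degree--$dp$--surface $\mathcal{G}$ with the additional $bac$--cycle property.

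For the final clause $H_d \neq H_{dp}$: the flag action of the free group on $\mathcal{G}$ satisfies every defining relation of $H_{dp}$, hence factors through $H_{dp}$; but $(bac)^d$ acts nontrivially on $\mathcal{G}$ because its cycles have length $dp \neq d$, so $(bac)^d \neq 1$ in $H_{dp}$. Since $(bac)^d = 1$ in $H_d$, the canonical surjection $H_{dp} \twoheadrightarrow H_d$ cannot be an isomorphism. The main obstacle throughout is the construction of $v$: arranging for $U$ and $V$ to be simultaneously nonzero on every orbit while respecting both the edge and triangle identities on the possibly infinite surface $S_0$. The intersection hypothesis on $\langle a,b\rangle$ is exactly what makes this combinatorial balancing possible.
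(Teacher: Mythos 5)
Your overall strategy is the paper's: start from the flag surface $(\rcosets{H_d}{\{1\}},a,b,c)$, lift it via a corner voltage assignment (Remark \ref{Rem_LiftIsTriangulated}), use the hypothesis on $(bc)^k,(bac)^k\in\langle a,b\rangle$ to see that the $d$ flags visited by one umbrella or geodesic cycle lie in $d$ distinct faces, and finish the claim $H_d\neq H_{dp}$ exactly as the paper does. The computation of the accumulated voltages $U(x)=\sum_{i=0}^{d-1}v(c(bc)^ix)$ and $V(x)=\sum_{i=0}^{d-1}v(ac(bac)^ix)$ and the reduction ``cycle length $=dp$ iff these are nonzero'' are also correct.

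However, there is a genuine gap at exactly the point you yourself flag as ``the main obstacle'': the existence of a corner voltage assignment $v\colon H_d\to\Z/p\Z$ satisfying the edge and triangle identities \emph{and} making every umbrella sum $U(x)$ and every geodesic sum $V(x)$ nonzero. The paragraph about faces meeting each umbrella/geodesic in a single corner, so that voltages ``can be prescribed geodesic--by--geodesic and then rebalanced within each face without creating cyclic obstructions'', is an assertion, not an argument: each corner value is shared by one face (triangle relation), one umbrella and one geodesic, so you face a global, possibly infinite constraint--satisfaction problem over a single copy of $\Z/p\Z$, where cancellation among the $d$ summands of $U(x)$ or $V(x)$ is entirely possible. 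The case $p=2$ shows the danger concretely: over $\Z/2\Z$ the triangle condition $v(x)+v(ab.x)+v(ba.x)=0$ forbids all three corners of a face from being nonzero, so one cannot simply take all corner values nontrivial, and it is not at all obvious that all umbrella and geodesic sums can be made odd simultaneously. The paper circumvents this entire difficulty by a different choice of voltage group: $B=(\Z/p\Z)^F$ (one direct factor per face, with $V_4$ in place of $\Z/p\Z$ when $p=2$, using $s\cdot t\cdot st=1$ to satisfy the triangle relation with all values nontrivial), and by letting $v(x)$ be supported only in the component of the face containing $x$. Then the $d$ contributions to an umbrella or geodesic sum land in pairwise \emph{distinct} components -- this is precisely where the hypothesis $d\mid k$ is used -- so no cancellation can occur and the accumulated voltage automatically has order $p$, giving cycles of length exactly $dp$ with no further combinatorial balancing. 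To repair your proof you would either have to adopt such a cancellation--free voltage group or actually construct the $\Z/p\Z$--valued assignment you postulate; as written, the central step is missing.
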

\begin{proof}
    To construct the surface, we start with the geodesic self--dual
    degree--$d$--surface $(\rcosets{H_d}{\{1\}},a,b,c)$. Let
    $F$ be the set of $\langle a,b\rangle$--orbits (the faces).
    Choose one element $f$ from each orbit to represent the orbit
    as $\langle a,b\rangle.f$. Define the voltage group
    \begin{equation*}
        B \isdef \begin{cases}
            (\Z/p\Z)^F & p \neq 2 \\
            (V_4)^F & p = 2 \text{, with } V_4 = \langle s,t\mid s^2,t^2,(st)^2\rangle
        \end{cases}
    \end{equation*}
    and the corner voltage assignment $v$
    as follows: $v(x)$ only has a non--trivial value in the
    component $\langle a,b\rangle.f_x$, where 
    $x \in \langle a,b\rangle.f_x$. For the elements of this
    orbit, its value is defined as
    \begin{align*}
        f_x &\mapsto 1   & ab.f_x &\mapsto 1 & abab.f_x &\mapsto p-2 \\
        b.f_x &\mapsto p-1   & bab.f_x &\mapsto p-1 & babab.f_x &\mapsto 2
    \end{align*}
    for odd $p$ and as
    \begin{align*}
        f_x &\mapsto s   & ab.f_x &\mapsto t & abab.f_x &\mapsto st \\
        b.f_x &\mapsto s   & bab.f_x &\mapsto t & babab.f_x &\mapsto st
    \end{align*}
    for $p=2$.
    By Remark \ref{Rem_LiftIsTriangulated}, the lift via $v$
    produces a surface 
    $(\mathcal{G},\V{\hat\alpha},\E{\hat\beta},\F{\hat\gamma})$
    (after restriction to one orbit of 
    $\langle \V{\hat\alpha},\E{\hat\beta},\F{\hat\gamma}\rangle$ 
    on $\rcosets{H_d}{\{1\}} \times B$).

    We compute the cycle lengths of $\E{\hat\beta}\F{\hat\gamma}$ 
    (umbrellas)
    and $\E{\hat\beta}\V{\hat\alpha}\F{\hat\gamma}$ (geodesics). Since
    the argument for them is similar, we only give the case for
    $\E{\hat\beta}\F{\hat\gamma}$.

    Let $\hat{f} = (x,g) \in \mathcal{G} \subset \rcosets{H_d}{\{1\}} \times B$.
    If $\langle a,b\rangle.(bc)^kx = \langle a,b\rangle.x$, we
    conclude $(bc)^k \in \langle a,b\rangle$. By assumption, this
    is only possible if $k$ is a multiple of $d$. In particular,
    $(\E{\hat\beta}\F{\hat\gamma})^d.\hat{f} = (x,wg)$, with $w \in B$
    such that $w$ has a non--trivial entry at each 
    $(\E{\hat\beta}\F{\hat\gamma})^k.x$
    (for $0 \leq k < d$). By the previous analysis, these positions are
    all distinct. Since all non--trivial elements in $\Z/p\Z$ have order
    $p$ and all non--trivial elements in $V_4$ have order 2, 
    the order of $\E{\hat\beta}\F{\hat\gamma}$ is $dp$.

    To show the additional claim, observe that $T_{dp}$ acts transitively
    on $\mathcal{G}$ (Remark \ref{Rem_TriangleGroupActsOnFlags}). The
    element $(bac)^{dp} \in T_{dp}$ acts trivially, thus
    $H_{dp} = T_{dp}/\llangle (bac)^{dp}\rrangle$ also acts transitively
    on $\mathcal{G}$. But the $bac$--orbits of $H_d$ have maximal
    length $d$, so $H_d \neq H_{dp}$.
\end{proof}

\begin{proposition}\label{Prop_HpnUncollapsed}
    $H_{p^n}$ is uncollapsed for $p > 3$ prime and $n > 1$.
\end{proposition}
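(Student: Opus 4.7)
My plan is induction on $n$. By Lemma \ref{Lem_UncollapsedReducedToPrimes}, since $p$ is the only prime dividing $p^n$, showing that $H_{p^n}$ is uncollapsed amounts to showing $H_{p^n} \neq H_{p^{n-1}}$. The inductive step therefore consists of producing, from an uncollapsed $H_{p^{n-1}}$, a certificate that $H_{p^n}$ is strictly larger; Proposition \ref{Prop_ConstructLiftSurface} is engineered precisely for this task, provided one can verify its intersection hypothesis, and Lemma \ref{Lem_DifferentFacesInUmbrellaForPrimePower} does that as soon as $H_{p^{n-1}}$ is both uncollapsed and infinite.

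For the base case $n=2$ I split on $p$. If $p=5$, Theorem \ref{Theo_PrimePowersUncollapsedWithGAP} already gives the conclusion. If $p=7$, then $H_{49}$ is listed as uncollapsed in Corollary \ref{Cor_UncollapsedByFiniteFactor}. If $p \geq 11$, Corollary \ref{Cor_UncollapsedByFiniteFactor} gives that $H_p$ is uncollapsed, and the table in Remark \ref{Rem_SelfDualUniversalCoverGroupSizes} gives that $H_p$ is infinite; Lemma \ref{Lem_DifferentFacesInUmbrellaForPrimePower} then supplies the intersection hypothesis of Proposition \ref{Prop_ConstructLiftSurface}, and applying the proposition with $d=p$ (and the prime $p$) yields $H_p \neq H_{p^2}$. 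By the reduction at the start, $H_{p^2}$ is uncollapsed.

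For the inductive step $n \geq 3$, the case $p = 5$ is again immediate from Theorem \ref{Theo_PrimePowersUncollapsedWithGAP}, so I may assume $p \geq 7$. The inductive hypothesis says that $H_{p^{n-1}}$ is uncollapsed, and since $p^{n-1} \geq 49 \geq 10$, Remark \ref{Rem_SelfDualUniversalCoverGroupSizes} also gives that $H_{p^{n-1}}$ is infinite. Hence Lemma \ref{Lem_DifferentFacesInUmbrellaForPrimePower} verifies the intersection hypothesis of Proposition \ref{Prop_ConstructLiftSurface}; applying the proposition with $d = p^{n-1}$ yields $H_{p^{n-1}} \neq H_{p^n}$, and Lemma \ref{Lem_UncollapsedReducedToPrimes} concludes that $H_{p^n}$ is uncollapsed.

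The genuine difficulty is not located in this proof. It sits upstream in Proposition \ref{Prop_ConstructLiftSurface} and Lemma \ref{Lem_DifferentFacesInUmbrellaForPrimePower}, which carry the combinatorial content (the corner voltage construction and the control of umbrella and geodesic cycle lengths in the lift). The present argument is little more than bookkeeping that threads those results through the reduction to prime indices; the only point demanding care is confirming, in every branch of the induction, that $H_{p^{n-1}}$ satisfies \emph{both} hypotheses of Lemma \ref{Lem_DifferentFacesInUmbrellaForPrimePower}, namely infinite \emph{and} uncollapsed, which is why the split at $p = 7$ in the base case is handled separately via Corollary \ref{Cor_UncollapsedByFiniteFactor} rather than via Proposition \ref{Prop_ConstructLiftSurface}.
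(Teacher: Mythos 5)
Your proof is correct and takes essentially the same route as the paper's: an induction whose base cases ($H_{49}$, and $H_p$ for $p\geq 11$ via Corollary \ref{Cor_UncollapsedByFiniteFactor}) are infinite by Remark \ref{Rem_SelfDualUniversalCoverGroupSizes}, with the inductive step threaded through Lemma \ref{Lem_DifferentFacesInUmbrellaForPrimePower}, Proposition \ref{Prop_ConstructLiftSurface}, and Lemma \ref{Lem_UncollapsedReducedToPrimes}. You are more explicit than the paper's terse argument, and your dispatching of $p=5$ via Theorem \ref{Theo_PrimePowersUncollapsedWithGAP} (rather than the paper's base case $H_{25}$) is only a cosmetic difference.
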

\begin{proof}
    We show the claim by induction. By Corollary 
    \ref{Cor_UncollapsedByFiniteFactor}, $H_{25}$ and $H_{49}$
    are uncollapsed, together with all $H_p$ with $p > 10$ prime.
    Also, all of them are infinite by Remark 
    \ref{Rem_SelfDualUniversalCoverGroupSizes} and satisfy the 
    assumption of proposition \ref{Prop_ConstructLiftSurface}
    by Lemma \ref{Lem_DifferentFacesInUmbrellaForPrimePower}.
\end{proof}

\begin{proposition}\label{Prop_HFourPUncollapsed}
    $H_{4p}$ is uncollapsed for all odd primes $p > 3$.
\end{proposition}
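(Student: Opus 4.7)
The plan is to apply Lemma~\ref{Lem_UncollapsedReducedToPrimes}: since the only primes dividing $4p$ are $2$ and $p$, it suffices to prove $H_{2p} \neq H_{4p}$ and $H_4 \neq H_{4p}$.

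For the first inequality, I would invoke Proposition~\ref{Prop_ConstructLiftSurface} with base group $H_{2p}$ and lift prime $2$. The hypotheses are all already available: $H_{2p}$ is uncollapsed by Lemma~\ref{Lem_DoublePrimeIsUncollapsed}; since $p \geq 5$ we have $2p \geq 10$, so $H_{2p}$ is infinite by Remark~\ref{Rem_SelfDualUniversalCoverGroupSizes}; and Lemma~\ref{Lem_DifferentFacesInUmbrellaForDoublePrime} then provides precisely the required umbrella/geodesic condition, namely that $(bc)^k \in \langle a,b\rangle$ or $(bac)^k \in \langle a,b\rangle$ forces $2p \mid k$. The conclusion of the proposition is exactly $H_{2p} \neq H_{2p \cdot 2} = H_{4p}$.

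For the second inequality, the very same application of Proposition~\ref{Prop_ConstructLiftSurface} produces a degree-$4p$-surface on which $H_{4p}$ acts transitively and in which the image of $bc$ (i.e.\ $\E{\hat\beta}\F{\hat\gamma}$) decomposes into $4p$-cycles. Hence $bc$ has order exactly $4p$ in $H_{4p}$, and in particular $|H_{4p}| \geq 4p > 4 = |H_4|$ by Remark~\ref{Rem_SelfDualUniversalCoverGroupSizes}, so $H_4 \neq H_{4p}$.

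I do not expect any real obstacle, since the argument is essentially an assembly of the infrastructure developed in this section; the only non-trivial combinatorial input is the umbrella/geodesic lemma for $2p$, which has already been done. The one subtlety worth explicitly flagging is that Lemma~\ref{Lem_DifferentFacesInUmbrellaForDoublePrime} applies unchanged for $p = 7$ as well: the earlier restriction $d \neq 7$ in the paper concerns the degree of the surface and $2p = 14 \neq 7$, while the statement here concerns the odd prime $p$ in $4p$.
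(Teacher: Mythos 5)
Your proof is correct and follows essentially the same route as the paper: reduce via Lemma~\ref{Lem_UncollapsedReducedToPrimes} to showing $H_{2p} \neq H_{4p}$ and $H_4 \neq H_{4p}$, and obtain the former from Proposition~\ref{Prop_ConstructLiftSurface} with hypotheses supplied by Lemma~\ref{Lem_DoublePrimeIsUncollapsed}, the infiniteness of $H_{2p}$ (Remark~\ref{Rem_SelfDualUniversalCoverGroupSizes}), and Lemma~\ref{Lem_DifferentFacesInUmbrellaForDoublePrime}. The only (harmless) deviation is the second inequality, which the paper reads off Remark~\ref{Rem_SelfDualUniversalCoverGroupSizes} directly ($|H_4| = 4$ while $H_{4p}$ is infinite for $p>3$), whereas you deduce it from the order of $bc$ in the lifted surface; your remark that $p = 7$ is unproblematic here is also correct, since the restriction $d \neq 7$ concerns the degree.
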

\begin{proof}
    By Lemma \ref{Lem_UncollapsedReducedToPrimes}, we only have
    to consider $H_4$ and $H_{2p}$. From Remark
    \ref{Rem_SelfDualUniversalCoverGroupSizes}, clearly $H_4 \neq H_{4p}$.
    Further, $H_{2p}$ is infinite ($p > 3$), uncollapsed (Lemma 
    \ref{Lem_DoublePrimeIsUncollapsed}) and fulfills the assumption
    of Proposition \ref{Prop_ConstructLiftSurface} (Lemma 
    \ref{Lem_DifferentFacesInUmbrellaForDoublePrime}).
\end{proof}

\subsection{Uncollapsed induction}\label{Subsection_UncollapsedInduction}
After several partial results, we show that the remaining cases follow
inductively. The central observation is the following lemma:
\begin{lemma}\label{Lem_ReduceByCommonFactor}
    Let $d = k \cdot p$ for a prime $p$ such that there is a $z \mid k$
    with $p \nmid z$. Then,
    $H_d = H_k$ implies $H_{\frac{d}{z}} = H_{\frac{k}{z}}$.
\end{lemma}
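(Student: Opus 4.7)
The strategy is to interpret the hypothesis $H_d = H_k$ as the statement that the relators $(bc)^k$ and $(bac)^k$ already vanish in $H_d$, push them down to $H_{d/z}$ via the canonical surjection, and then combine with the defining relation of $H_{d/z}$ via a $\gcd$ argument that crucially exploits $\gcd(z,p) = 1$.

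First I would note that, since $k \mid d$, there is a canonical surjection $H_d \twoheadrightarrow H_k$ sending each generator to itself. Therefore, the equality $H_d = H_k$ is equivalent to saying that $(bc)^k = 1$ and $(bac)^k = 1$ already hold in $H_d$. Next, because $z \mid k$ and $p \mid d$, we have $d/z = (k/z)p$, which in particular divides $d$, so that there is a well--defined canonical surjection $\pi\colon H_d \twoheadrightarrow H_{d/z}$ sending generators to generators. Pushing the two relations obtained above through $\pi$ gives $(bc)^k = 1$ and $(bac)^k = 1$ in $H_{d/z}$.

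The key step is then a short $\gcd$ computation. In $H_{d/z}$, the element $bc$ satisfies both $(bc)^{d/z} = 1$ (by definition of $H_{d/z}$) and $(bc)^k = 1$ (by the previous paragraph), so its order divides $\gcd(k, d/z)$. Writing $m \isdef k/z$, we have $k = zm$ and $d/z = mp$, so
\begin{equation*}
    \gcd(k, d/z) = \gcd(zm, mp) = m \cdot \gcd(z, p) = m = k/z,
\end{equation*}
where the penultimate equality uses that $p$ is prime and $p \nmid z$. Hence $(bc)^{k/z} = 1$ in $H_{d/z}$, and applying the same reasoning to $bac$ yields $(bac)^{k/z} = 1$.

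Finally, since $k/z \mid d/z$, there is a canonical surjection $H_{d/z} \twoheadrightarrow H_{k/z}$, and the two relations just derived say precisely that this surjection is an isomorphism, i.e.\ $H_{d/z} = H_{k/z}$, as required. I do not anticipate any genuine obstacle: the only care needed is the bookkeeping that the divisibilities between $k/z$, $d/z$, $k$ and $d$ make the successive canonical maps well--defined, and the observation that the coprimality condition $\gcd(z,p) = 1$ is exactly what is needed to bring the $\gcd$ down from $k$ to $k/z$.
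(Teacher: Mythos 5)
Your proposal is correct and takes essentially the same route as the paper: both arguments rest on the computation $\gcd(k,d/z)=k/z$ (which is exactly where $p\nmid z$ enters) and on identifying the groups via the canonical generator--preserving quotients, the only difference being that you phrase it as the relators $(bc)^{k/z}$, $(bac)^{k/z}$ vanishing in $H_{d/z}$ so that the surjection onto $H_{k/z}$ is an isomorphism, while the paper writes the same content as a chain of presentation equalities $H_{d/z}=H_d/\llangle (bc)^{d/z},(bac)^{d/z}\rrangle=H_k/\llangle (bc)^{d/z},(bac)^{d/z}\rrangle=H_{k/z}$.
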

\begin{proof}
    Clearly, $\frac{k}{z} \mid \gcd(k, \frac{kp}{z})$. 
    If $p^n \mid k$, then $p^{n+1} \mid \frac{kp}{z}$, since
    $p \nmid z$. Therefore, $\frac{k}{z} = \gcd(k,\frac{kp}{z})$ and
    we have
    $H_{\frac{d}{z}} = H_d / \llangle (bc)^{\frac{d}{z}}, (bac)^{\frac{d}{z}}\rrangle = H_k / \llangle (bc)^{\frac{d}{z}}, (bac)^{\frac{d}{z}}\rrangle = H_{\frac{k}{z}}$.
\end{proof}

\begin{theorem}\label{Theo_HdUncollapsed}
    $H_d$ is uncollapsed for all $d \geq 5$ with $d \neq 7$.
\end{theorem}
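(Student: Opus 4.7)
The plan is strong induction on $d$. The induction hypothesis asserts that $H_e$ is uncollapsed for every $5\leq e<d$ with $e\neq 7$, and I additionally use the direct uncollapsedness of $H_2$ (from Remark~\ref{Rem_SelfDualUniversalCoverGroupSizes}, since $|H_2|=4\neq 1=|H_1|$) together with the outputs of Theorem~\ref{Theo_PrimePowersUncollapsedWithGAP}, Proposition~\ref{Prop_HpnUncollapsed}, Lemma~\ref{Lem_DoublePrimeIsUncollapsed}, Proposition~\ref{Prop_HFourPUncollapsed}, and Corollary~\ref{Cor_UncollapsedByFiniteFactor}. By Lemma~\ref{Lem_UncollapsedReducedToPrimes} it suffices to prove $H_{d/p}\neq H_d$ for each prime $p\mid d$, and if $d$ is a prime power this is one of the quoted base results. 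Otherwise $d$ has a second prime factor $q\neq p$, and the main tool is the contrapositive of Lemma~\ref{Lem_ReduceByCommonFactor}: any $z>1$ with $z\mid d/p$, $\gcd(z,p)=1$, and $H_{d/z}$ uncollapsed already forces $H_d\neq H_{d/p}$. The default choice is $z=q^{v_q(d)}$, which strips the entire $q$-part.

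When $d$ has at least three distinct primes the reduced $d/z$ still has at least two, so $d/z\geq 6$ and $d/z\neq 7$, and the induction hypothesis closes the case. When $d=p^aq^b$ and $p^a=d/q^b\notin\{2,3,4,7\}$ the same default $z$ again works. The obstructions are therefore exactly the four families $d=p^aq^b$ with $p^a\in\{2,3,4,7\}$. For $d=2q^b$ I switch to $z=q^b$, reducing the desired inequality to $H_2\neq H_1$, which is immediate. For $d=3q^b$ and $d=7q^b$ with $b\geq 2$, taking $z=q^{b-1}$ reduces to $H_{3q}\neq H_q$, respectively $H_{7q}\neq H_q$, both strictly smaller than $d$ and thus provided by the induction once the $b=1$ case of the same family is handled. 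For $d=4q^b$, the subcase $b=1$ is Proposition~\ref{Prop_HFourPUncollapsed} directly (with $q=3$ giving $d=12$, in Corollary~\ref{Cor_UncollapsedByFiniteFactor}), and $b\geq 2$ reduces via $z=q^{b-1}$ to $H_{4q}\neq H_{2q}$.

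The genuine obstacle is the residual base family $d=3q$ and $d=7q$ with $q$ prime and $q\geq 11$: every admissible $z$ forces $d/z\in\{3,7\}$, and since $H_3=H_7=H_1$ no Lemma~\ref{Lem_ReduceByCommonFactor} reduction produces an uncollapsed target for the hard direction $H_d\neq H_{d/p}$ with $p\in\{3,7\}$. The easy direction $H_d\neq H_{d/q}$ still follows from Lemma~\ref{Lem_ReduceByCommonFactor} with $z=p\in\{3,7\}$, since it reduces to $H_q\neq H_1$, and $H_q$ is infinite for $q\geq 11$ (Remark~\ref{Rem_SelfDualUniversalCoverGroupSizes}). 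For the hard direction I apply Proposition~\ref{Prop_ConstructLiftSurface} with base group $H_q$ and lifting prime $p\in\{3,7\}$: $H_q$ is uncollapsed by Corollary~\ref{Cor_UncollapsedByFiniteFactor} (since $q\notin\{3,7\}$) and infinite, so Lemma~\ref{Lem_DifferentFacesInUmbrellaForPrimePower} supplies the required hypothesis on the cycles $(bc)^k$ and $(bac)^k$, and the proposition directly yields $H_q\neq H_{pq}=H_d$. This resolves the residual family and completes the induction.
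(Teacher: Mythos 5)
Your overall strategy is sound and is essentially the paper's argument recast as a strong induction: the contrapositive use of Lemma~\ref{Lem_ReduceByCommonFactor}, the prime--power base results (Theorem~\ref{Theo_PrimePowersUncollapsedWithGAP}, Proposition~\ref{Prop_HpnUncollapsed}, Corollary~\ref{Cor_UncollapsedByFiniteFactor}), Proposition~\ref{Prop_HFourPUncollapsed}, and the lift construction of Proposition~\ref{Prop_ConstructLiftSurface} for $d=3q,7q$ are exactly the ingredients of the paper's proof. The gap is at the bottom of the $3q^b$ and $7q^b$ families. You declare the residual base family to be $d=3q,7q$ with $q\geq 11$ and resolve it via the lift, but you never dispose of the same family for $q\in\{2,5\}$, nor of the mixed case $d=21=3\cdot 7$; as you yourself note, no admissible $z$ in Lemma~\ref{Lem_ReduceByCommonFactor} helps there, and the lift argument is unavailable because $H_2$ and $H_5$ are finite (Lemma~\ref{Lem_DifferentFacesInUmbrellaForCoprimes} and Lemma~\ref{Lem_DifferentFacesInUmbrellaForPrimePower} require an infinite base), while $H_3=H_7=\{1\}$. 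Concretely, the hard directions $H_6\neq H_2$, $H_{14}\neq H_2$, $H_{15}\neq H_5$, $H_{35}\neq H_5$, and $H_{21}\neq H_3$, $H_{21}\neq H_7$ are never proved in your text. This is not just five stray values: your $b\geq 2$ reduction feeds on exactly these cases through the induction hypothesis (for instance $d=24$ with $z=4$ reduces to $H_6\neq H_2$, and $d=28,56,\dots$ reduce to $H_{14}\neq H_2$), so the induction as written does not close.

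The repair is immediate with tools you already list as inputs but never deploy here: $H_6$, $H_{15}$, $H_{21}$, $H_{35}$ are uncollapsed by Corollary~\ref{Cor_UncollapsedByFiniteFactor}, and $H_{14}$ by Lemma~\ref{Lem_DoublePrimeIsUncollapsed}; adding these as explicit base cases completes your argument (the paper handles them via its clause for $q\in\{3,5,7\}$, and in fact its ``otherwise'' clause is also slightly loose at $q=2$, where Lemma~\ref{Lem_DoublePrimeIsUncollapsed} is the needed fix for $H_{14}=H_2$). A minor inaccuracy besides: $H_2$ is uncollapsed (it is in Corollary~\ref{Cor_UncollapsedByFiniteFactor}), so $p^a=2$ is not genuinely an obstruction family, and your separate treatment of $d=2q^b$ is just the default reduction again.
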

\begin{proof}
    Assume $H_d$ is a counterexample.
    By Lemma \ref{Lem_UncollapsedReducedToPrimes}, there is a prime $p$
    with $d = p^{n+1}z$ with $n \geq 0$ and $p \nmid z$,
    such that $H_{p^nz} = H_d$.

    By Lemma \ref{Lem_ReduceByCommonFactor}, this implies 
    $H_{p^n} = H_{p^{n+1}}$. We distinguish several cases:
    \begin{itemize}
        \item $p=2$: By Theorem \ref{Theo_PrimePowersUncollapsedWithGAP},
            this is only possible if $n \in \{0,1\}$. Corollary
            \ref{Cor_UncollapsedByFiniteFactor} restricts this further
            to $n = 1$.
        \item $p=3$: By Theorem \ref{Theo_PrimePowersUncollapsedWithGAP},
            this is only possible for $n = 0$.
        \item $p=7$: By Proposition \ref{Prop_HpnUncollapsed}, this
            is only possible for $n=0$.
        \item In all other cases, the combination of Theorem
            \ref{Theo_PrimePowersUncollapsedWithGAP}, Proposition \ref{Prop_HpnUncollapsed}
            and Corollary \ref{Cor_UncollapsedByFiniteFactor} makes
            this situation impossible.
    \end{itemize}
    For the three remaining cases, we apply Lemma \ref{Lem_ReduceByCommonFactor}
    to $d = p^{n+1}z$ in a different way: Let $q$ be a prime dividing $z$.
    Then we can use Lemma \ref{Lem_ReduceByCommonFactor} to divide
    by $\frac{z}{q}$. This gives the three cases
    $H_{4q} = H_{2q}$, $H_{3q} = H_q$, and $H_{7q} = H_q$. 
    
    The first one
    is impossible by Proposition \ref{Prop_HFourPUncollapsed} (for $p>3$)
    and Corollary \ref{Cor_UncollapsedByFiniteFactor} (for $p = 3$).
    If $q \in \{3,5,7\}$, the impossibility of the 
    other cases follow from Corollary
    \ref{Cor_UncollapsedByFiniteFactor}. Otherwise, $H_q$ satisfies
    the assumptions of Proposition \ref{Prop_ConstructLiftSurface}
    (infinite by Remark \ref{Rem_SelfDualUniversalCoverGroupSizes},
    so Lemma \ref{Lem_DifferentFacesInUmbrellaForPrimePower} holds).
\end{proof}

\section{Classification}\label{Sect_Classification}
In this section, we complete the proof of the main theorem.
Afterwards, we give a complete classification of all
geodesic self--dual degree--$d$--surfaces for $d < 10$. Several of
these surfaces are also available in the GAP--package 
\texttt{SimplicialSurfaces} (\cite{SimplicialSurfaces}). In these
cases, we will also give the command to generate this particular surface.

Recall Definition \ref{Def_GeodesicTriangleGroup} of the geodesic
triangle group $H_d$ and Definition \ref{Def_GeodesicAutomorphism}
of the geodesic automorphism $\#: H_d \to H_d$.
\begin{theorem}\label{Theo_GeodesicSelfDual}
    Let $V \leq H_d$ with $d \geq 5$ and $d \neq 7$.
    Then
    $(\rcosets{H_d}{V},a,b,c)$ is a geodesic self--dual degree--$d$--surface
    if and only if
    \begin{itemize}
        \item $g^{-1}Vg \cap X = \{1\}$ for $X \in \{\langle a\rangle,\langle c\rangle,\langle ab\rangle,\langle ac\rangle,\langle bc\rangle\}$
            and all $g \in H_d$.
        \item $\geoAut{V}$ is conjugate to $V$.
    \end{itemize}
    Furthermore, all geodesic self--dual degree--$d$--surfaces have this form.
\end{theorem}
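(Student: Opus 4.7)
The plan is to reduce this theorem directly to Corollary \ref{Cor_GeodesicSurfaceSubgroupCharacterisation}: that corollary already gives the desired equivalence, except that it carries an additional third condition, namely $\llangle (bac)^d\rrangle \cap \langle bc\rangle = \{1\}$ in $T_d$. So the entire content of the theorem is that this extra condition is automatic under the hypothesis $d \geq 5$ and $d \neq 7$.

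The first step is to invoke Lemma \ref{Lem_UncollapsedConditionForSelfDuality} to translate the third condition from the corollary into the language of collapsing. Since $(bc)^d = 1$ in $T_d$, the intersection $\llangle (bac)^d\rrangle \cap \langle bc\rangle$ is always of the form $\langle (bc)^k\rangle$ for some divisor $k$ of $d$, and this intersection is trivial precisely when $k = d$. By the lemma, the intersection equals $\langle (bc)^k\rangle$ if and only if $H_d = H_k$. Thus the third condition from Corollary \ref{Cor_GeodesicSurfaceSubgroupCharacterisation} is equivalent to $H_d \neq H_k$ for all proper divisors $k$ of $d$, i.e., to $H_d$ being uncollapsed in the sense of Definition \ref{Def_Uncollapsed}.

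The second and final step is then simply to apply Theorem \ref{Theo_HdUncollapsed}, which asserts that $H_d$ is uncollapsed for all $d \geq 5$ with $d \neq 7$. This removes the condition from the corollary, leaving exactly the two conditions in the statement of the theorem. The final clause (\emph{every} geodesic self--dual degree--$d$--surface has this form) transfers directly from the analogous clause of Corollary \ref{Cor_GeodesicSurfaceSubgroupCharacterisation}, since by Remark \ref{Rem_LatticeIsomorphism} every surface subgroup $U \leq T_d$ containing $\llangle (bac)^d\rrangle$ descends to a unique $V \leq H_d$ describing the same surface, and Lemma \ref{Lem_SurfaceSubgroupsThatAreGeodesicSelfDualImplyNormalSubgroup} guarantees that for a geodesic self--dual surface this containment always holds.

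In short, no new argument is required at this point: the theorem is the combination of Corollary \ref{Cor_GeodesicSurfaceSubgroupCharacterisation} with the uncollapsedness Theorem \ref{Theo_HdUncollapsed}. The main obstacle was already overcome earlier, namely the proof of Theorem \ref{Theo_HdUncollapsed} itself, which required both the voltage--assignment constructions of Subsection \ref{Subsect_VoltageAssignments} and the inductive reduction of Subsection \ref{Subsection_UncollapsedInduction}; the present proof is only the bookkeeping that puts these pieces together.
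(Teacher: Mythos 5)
Your proposal is correct and follows essentially the same route as the paper: the paper's proof is precisely the combination of Corollary \ref{Cor_GeodesicSurfaceSubgroupCharacterisation} with Theorem \ref{Theo_HdUncollapsed}, using Lemma \ref{Lem_UncollapsedConditionForSelfDuality} and Definition \ref{Def_Uncollapsed} to recognise the third condition as uncollapsedness. Your write-up merely spells out this bookkeeping in more detail than the paper does.
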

\begin{proof}
    This follows from Corollary \ref{Cor_GeodesicSurfaceSubgroupCharacterisation}
    and Theorem \ref{Theo_HdUncollapsed}, by using the reformulation
    from Lemma \ref{Lem_UncollapsedConditionForSelfDuality}
    and Definition \ref{Def_Uncollapsed}.
\end{proof}

For $d \in \{5,6,8,9\}$, the group $H_d$ is finite, so 
we can use GAP (\cite{GAP4})
to compute all geodesic self--dual degree--$d$--surfaces.

\begin{example}
    For $d = 5$, there is only one geodesic self--dual surface, since
    only the trivial subgroup $\{1\}$ satisfies Theorem 
    \ref{Theo_GeodesicSelfDual}.
    This defines the projective plane on
    10 triangles (6 vertices and 15 edges), shown in figure  %TODO can we stretch the picture to have more space? Or wrap it?
    \ref{Fig_SelfDual5}.
    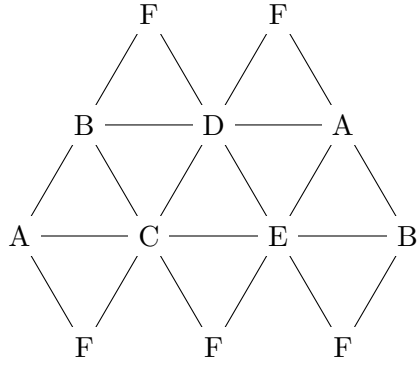
\begin{figure}[htb]
        \begin{center}
            \begin{tikzpicture}[scale=1.7]
                \coordinate (Z) at (0,0);
                \coordinate (P0) at (0:1);
                \coordinate (P1) at (60:1);
                \coordinate (P2) at (120:1);
                \coordinate (P3) at (180:1);
                \coordinate (P4) at (240:1);
                \coordinate (P5) at (300:1);
                \coordinate (Q34) at ($(P3)+(P4)$);
                \coordinate (Q44) at ($2*(P4)$);
                \coordinate (Q45) at ($(P4)+(P5)$);
                \coordinate (Q55) at ($2*(P5)$);
                \coordinate (Q05) at ($(P0)+(P5)$);

                \draw (Q34) -- (Q44) -- (P1) -- (Q05) -- (Q34) -- (P2) -- (Z) -- (P3) -- (Q45) -- (P0) -- (Z) -- (Q55) -- (Q05);
                \foreach \p/\n in {Z/D, P0/A, P1/F, P2/F, P3/B, P4/C, P5/E, Q34/A, Q44/F, Q45/F, Q55/F, Q05/B}
                    \node[fill=white] at (\p) {\n};
            \end{tikzpicture}
        \end{center}
        \caption{Geodesic self--dual degree--5--surface}
        \label{Fig_SelfDual5}
    \end{figure}
    The associated command in the \texttt{SimplicialSurfaces}--package
    is \texttt{AllGeodesicSelfDualSurfaces(10)[1]}.
\end{example}

\begin{example}
    For $d = 6$, there are exactly two geodesic self--dual surfaces, since
    there are exactly two surface subgroups satisfying Theorem 
    \ref{Theo_GeodesicSelfDual} (both are tori):
    \begin{enumerate}
        \item The trivial subgroup $\{1\}$, defining a
            surface with 18 faces (9 vertices and 27 edges).
            Its command is \texttt{AllGeodesicSelfDualSurfaces(18)[1]}.
        \item A normal subgroup of size 3, defining a surface
            with 6 faces (3 vertices and 9 edges).
            Its command is \texttt{AllGeodesicSelfDualSurfaces(6)[1]}.
    \end{enumerate}
    \begin{figure}[htb]
        \begin{center}
            \begin{tikzpicture}[scale=1.5]
                \begin{scope}[shift={(-6,0)}]
                    \coordinate (x) at (1,0);
                    \coordinate (y) at (60:1);
                    \foreach \i in {0,1,2,3}{
                        \foreach \j in {0,1,2,3}{
                            \coordinate (P\i\j) at ($\i*(x)+\j*(y)$);
                        }
                    }
                    \foreach \i in {0,1,2,3}{
                        \foreach \u/\v in {0/1,1/2,2/3}{
                            \draw (P\i\u) -- (P\i\v);
                            \draw (P\u\i) -- (P\v\i);
                        }
                    }
                    \draw (P01) -- (P10) (P02) -- (P20) (P03) -- (P30) (P13) -- (P31) (P23) -- (P32);
                    \foreach \u/\v/\l in {0/0/A, 1/0/B, 2/0/C, 3/0/A, 0/1/D, 3/1/D, 0/2/E, 3/2/E, 0/3/A, 1/3/B, 2/3/C, 3/3/A}
                        \node[fill=white] at (P\u\v) {\l};
                \end{scope}

                \begin{scope}[shift={(0.5,0.9)}]
                    \coordinate (Z) at (0,0);
                    \coordinate (P5) at (-60:1);
                    \coordinate (P0) at (0:1);
                    \coordinate (P1)  at (60:1);
                    \coordinate (P2) at (120:1);
                    \coordinate (P3) at (180:1);
                    \coordinate (Q01) at ($(P0)+(P1)$);
                    \coordinate (Q12) at ($(P2)+(P1)$);

                    \draw (P5) -- (Q01) -- (P2) -- cycle;
                    \draw (P0) -- (Q12) -- (P3) -- cycle;
                    \draw (Z) -- (P1);

                    \foreach \p/\l in {Z/B, P5/A, P0/C, P1/A, P2/C, P3/A, Q01/B, Q12/B}
                        \node[fill=white] at (\p) {\l};
                \end{scope}
            \end{tikzpicture}
        \end{center}
        \caption{Geodesic self--dual degree--6--surfaces}
    \end{figure}
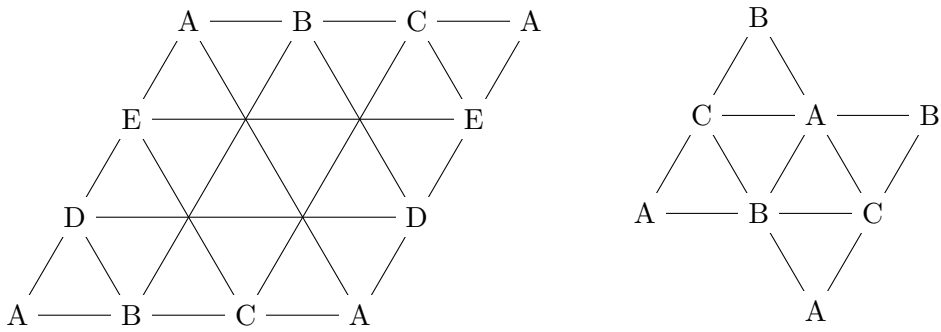
\end{example}

\begin{example}
    For $d = 8$, there are exactly four geodesic self--dual surfaces, defined
    by the four surface subgroups satisfying Theorem \ref{Theo_GeodesicSelfDual}
    (up to conjugation):
    \begin{enumerate}
        \item The trivial subgroup $\{1\}$, defining an orientable 
            surface with 42 vertices, 168 edges, and 112 faces
            (genus 8).

            Its command is \texttt{AllGeodesicSelfDualSurfaces(112)[1]}.
        \item A normal subgroup of size 2, defining a non--orientable
            surface with
            21 vertices, 84 edges, and 56 faces (genus 8).

            Its command is \texttt{AllGeodesicSelfDualSurfaces(56)[1]}.
        \item A subgroup of size 7 and index 96, defining an orientable
            surface with 6 vertices, 24 edges, and 16 faces
            (genus 2).
        \item A subgroup of size 14 and index 48, defining a non--orientable
            surface with 3 vertices, 12 edges, and 8 faces 
            (genus 2).
    \end{enumerate}
\end{example}

\begin{example}
    For $d = 9$, there are exactly three geodesic self--dual surfaces,
    defined by the three surface subgroups satisfying Theorem
    \ref{Theo_GeodesicSelfDual} (up to conjugation):
    \begin{enumerate}
        \item The trivial subgroup $\{1\}$, defining a non--orientable 
            surface with 190 vertices, 855 edges, and 570 faces (genus 96).

            Its command is \texttt{AllGeodesicSelfDualSurfaces(570)[1]}.
        \item A group of size 5 with index 684, defining a non--orientable
            surface with 38 vertices, 171 edges, and 114 faces (genus 20).
        \item A group of size 19 with index 180, defining a non--orientable
            surface with 10 vertices, 45 edges, and 30 faces (genus 6).
    \end{enumerate}
\end{example}

\section{Summary}
In this paper, we have characterised all geodesic self--dual
degree--$d$--surfaces. For $d < 10$, we classified all of them.
For $d \geq 10$, the situation is unclear: We conjecture that there are
infinitely many geodesic self--dual surfaces for each $d \geq 10$. This is 
based on the observation
that our calculations reached their computational limits before stopping
to construct further examples.

Unfortunately, it is still unclear whether the geodesic surface
subgroups of $H_d$ for $d \geq 10$ can be characterised in a
fashion that is more amenable to analysis.

\section*{Acknowledgements}
I am grateful for the support of my supervisor, Alice Niemeyer, in 
writing this paper. I am also grateful for the funding by 
the ``Graduiertenkolleg Experimentelle konstruktive Algebra'' (number 1632)
during the writing of this paper.

%%%%%%%%%%%%%%%%%%%%%%%%%%%%%%%%%%%%%%%%%%%%%%%%%%%
%% References
\cleardoublepage
\addcontentsline{toc}{section}{References}
%*flatex input: [Paper_GeodesicDuality.bbl]
\providecommand{\bysame}{\leavevmode\hbox to3em{\hrulefill}\thinspace}
\providecommand{\MR}{\relax\ifhmode\unskip\space\fi MR }
% \MRhref is called by the amsart/book/proc definition of \MR.
\providecommand{\MRhref}[2]{%
  \href{http://www.ams.org/mathscinet-getitem?mr=#1}{#2}
}
\providecommand{\href}[2]{#2}

% flatex input end: [Paper_GeodesicDuality.bbl]
%FLATEX-REM:\bibliographystyle{amsalpha}
%FLATEX-REM:\bibliography{../References}
%%%%%%%%%%%%%%%%%%%%%%%%%%%%%%%%%%%%%%%%%%%%%%%%%%%

\end{document}

% flatex input end: [Paper_Content.tex]

% flatex input end: [Paper_GeodesicDuality.tex]